\documentclass[11pt]{amsart}
\usepackage{amsmath, amssymb, amsthm, enumerate, graphicx,color,soul,verbatim,algorithmic,hyperref}

\usepackage{float}
\usepackage{setspace}
\usepackage{mathpazo}
\usepackage{indentfirst}  
\usepackage{fullpage}
\usepackage{thmtools}
\usepackage{thm-restate}
\usepackage{tikz}
\usetikzlibrary{arrows.meta}

\numberwithin{equation}{subsection}

\numberwithin{equation}{subsection}

\newtheorem{theorem}[subsection]{Theorem}
\newtheorem{lemma}[subsection]{Lemma}
\newtheorem{corollary}[subsection]{Corollary}

\newtheorem{prop}[subsection]{Proposition}

\theoremstyle{definition}
\newtheorem{definition}[subsection]{Definition}
\newtheorem{algorithm}[subsection]{Algorithm}

\newtheorem{example}[subsection]{Example}

\newtheorem{remark}[subsection]{Remark}

\newcommand{\TP}{\mathbb{TP}}

\newcommand{\conv}{\text{conv}}
\newcommand{\tconv}{\text{tconv}}
\newcommand{\diag}{\text{diag}}
\newcommand{\val}{\text{val}}


\parskip 7.2pt           
\usepackage{amsmath}
\usepackage{amsfonts}
\parindent 0pt	 
\author{Leon Zhang}
\email{leonyz@math.berkeley.edu}
\title{Computing convex hulls in the affine building of $SL_d$}

\begin{document}
\maketitle

\begin{abstract}
We describe an algorithm for computing the convex hull of a finite collection of points in the affine building of $SL_d(K)$, for $K$ a field with discrete valuation. These convex hulls describe the relations among a finite collection of invertible matrices over $K$. As a consequence, we bound the dimension of the tropical projective space needed to realize the convex hull as a tropical polytope.
\end{abstract}

\section{Introduction}

Affine buildings are infinite simplicial complexes originally introduced by Tits to study the structure of simple Lie groups. They have since found use in a variety of other contexts, including arithmetic and algebraic geometry \cite{CHSW, KT}, optimization \cite{H}, and phylogenetics \cite{DT}.

We consider the affine building $\mathcal B_d$ associated to the group $SL_d(K)$ over a discrete valued field $K$. There is a natural notion of convex hull in $\mathcal B_d$, which provides a geometric data structure for the relations among $d\times d$ invertible matrices over $K$. Originally introduced by Faltings \cite{F}, this data structure underlies Mustafin varieties \cite{CHSW, HL} and can be used to study the fundamental group of certain 3-manifolds \cite{Su}. Joswig, Sturmfels, and Yu \cite[Algorithm 2]{JSY} give a procedure for computing such a convex hull in $\mathcal B_d$ as the standard triangulation of a tropical convex hull in some tropical projective space. However, their algorithm requires the enumeration of all lattice points in the convex hull under consideration, which can be difficult to implement and is expensive in practice. We devise an improved algorithm with time complexity bounded in the dimension of the building and the number of matrices spanning the convex hull, making it feasible for the first time to compute convex hulls in practice.

We briefly describe the structure of this manuscript. In Section \ref{preliminaries} we review the basics of convex lattice theory and tropical geometry that we rely on throughout. We review an algorithm for computing an apartment containing two vertices and develop its application to our problem in Section \ref{SA-basis-section}. We then describe our novel algorithm and prove its correctness in Section \ref{convex-hulls}. In Section \ref{min-conv-triangles-section} we discuss an improvement on the previous algorithm when computing the convex hull of three lattice classes. Our algorithms have been implemented over the rational function field as a Polymake extension \cite{GJ}. Algorithm \ref{enveloping-membrane} has also been implemented in Mathematica over the field of rational numbers with a $p$-adic valuation. This software and the code for the examples in this paper can be found at 
{our supplementary materials webpage}:

\begin{center}
{\url{https://math.berkeley.edu/~leonyz/code/convex-hulls}}
\end{center}

\subsection{Acknowledgements}

The author would like to thank the Max Planck Institute for Mathematics in the Sciences for its hospitality while working on this project. He was partially supported by a National Science Foundation Graduate Research Fellowship. The author is grateful to Jacinta Torres, Lara Bossinger, and Madeline Brandt for reading early drafts of this manuscript. He would also like to thank Michael Joswig and Lars Kastner for generous help on writing a Polymake extension, Petra Schwer for suggesting a geometric interpretation of Lemma \ref{covering-lemma}, and Bernd Sturmfels for much valuable discussion and feedback.

\section{Preliminaries}
\label{preliminaries}

\subsection{Convex hulls}

We begin by fixing notation and reviewing the setup of \cite{JSY}. Let $K$ be a field with discrete valuation $\val:K\to\mathbb Z\cup \{\infty\}$, let $R$ be its valuation ring with residue field $k$, and $\pi$ a uniformizer. Note that $K^d$ is an $R$-module in a natural way.

\begin{definition}
A \emph{lattice} $\Lambda$ is an $R$-submodule of $K^d$ generated by $d$ linearly independent vectors in $K^d$. We often represent a lattice by an invertible matrix whose columns generate the lattice.

Let $\Lambda_1, \Lambda_2\subseteq K^d$ be two lattices. We say that $\Lambda_1$ and $\Lambda_2$ are \emph{equivalent} if there exists $c\in K^*$ such that $\Lambda_1 = c\Lambda_2$, and we write $[\Lambda]$ for the equivalence class of the lattice $\Lambda$. We say that two equivalence classes of lattices are \emph{adjacent} if there exist representative lattices $\Lambda_1$ and $\Lambda_2$ respectively such that $\pi\Lambda_1\subseteq \Lambda_2\subseteq \Lambda_1$.

\end{definition}

\begin{definition}
Let $\mathcal B_d$ be the flag simplicial complex whose 0-simplices are equivalence classes of lattices in $K^d$ and whose 1-simplices correspond to adjacent equivalence classes. We call $\mathcal B_d$ the \emph{affine building} of $SL_d(K)$. 
\end{definition}

\begin{example}
Consider the building $\mathcal B_2$ for $K=\mathbb Q_3$ the $3$-adic numbers. In this case our valuation ring $R=\mathbb Z_3$ is the $3$-adic integers and our uniformizer $\pi$ is 3. The affine building $\mathcal B_2$ is the infinite tree with every vertex having degree 4 in Figure \ref{first-figure}.
\begin{figure}[h!]
\centering
\resizebox{0.375\textwidth}{!}{
\begin{tikzpicture}[baseline]
\filldraw (0,0) circle [radius=0.1];
\draw (.11, .11) -- (3, 3);
\draw (-0.11, 0.11) -- (-3, 3);
\draw (0.11, -0.11) -- (3, -3);
\draw (-0.11, -0.11) -- (-3, -3);
\filldraw (3,3) circle [radius=0.1];
\draw (3,3) -- (4, 2.5);
\draw (3,3) -- (2.5, 4);
\draw (3,3) -- (3.75, 3.75);
\filldraw (3,-3) circle [radius=0.1];
\draw (3,-3) -- (4, -2.5);
\draw (3,-3) -- (2.5, -4);
\draw (3,-3) -- (3.75, -3.75);
\filldraw (-3,3) circle [radius=0.1];
\draw (-3,3) -- (-4, 2.5);
\draw (-3,3) -- (-2.5, 4);
\draw (-3,3) -- (-3.75, 3.75);
\filldraw (-3,-3) circle [radius=0.1];
\draw (-3,-3) -- (-4, -2.5);
\draw (-3,-3) -- (-2.5, -4);
\draw (-3,-3) -- (-3.75, -3.75);
\filldraw (4,2.5) circle [radius=0.1];
\draw (4, 2.5) -- (4.5,2.5);
\draw (4, 2.5) -- (4.5,2.25);
\draw (4, 2.5) -- (4.5,2.75);
\filldraw (2.5,4) circle [radius=0.1];
\draw (2.5, 4) -- (2.5, 4.5);
\draw (2.5, 4) -- (2.25, 4.5);
\draw (2.5, 4) -- (2.75, 4.5);
\filldraw (3.75 ,3.75) circle [radius=0.1];
\draw (3.75, 3.75) -- (4.33, 4.33);
\draw (3.75, 3.75) -- (4.1, 4.45);
\draw (3.75, 3.75) -- (4.45, 4.1);
\filldraw (-4,-2.5) circle [radius=0.1];
\draw (-4, -2.5) -- (-4.5,-2.5);
\draw (-4, -2.5) -- (-4.5,-2.25);
\draw (-4, -2.5) -- (-4.5,-2.75);
\filldraw (-2.5,-4) circle [radius=0.1];
\draw (-2.5, -4) -- (-2.5, -4.5);
\draw (-2.5, -4) -- (-2.25, -4.5);
\draw (-2.5, -4) -- (-2.75, -4.5);
\filldraw (-3.75 ,-3.75) circle [radius=0.1];
\draw (-3.75, -3.75) -- (-4.33, -4.33);
\draw (-3.75, -3.75) -- (-4.1, -4.45);
\draw (-3.75, -3.75) -- (-4.45, -4.1);
\filldraw (-4,2.5) circle [radius=0.1];
\draw (-4, 2.5) -- (-4.5,2.5);
\draw (-4, 2.5) -- (-4.5,2.25);
\draw (-4, 2.5) -- (-4.5,2.75);
\filldraw (-2.5,4) circle [radius=0.1];
\draw (-2.5, 4) -- (-2.5, 4.5);
\draw (-2.5, 4) -- (-2.25, 4.5);
\draw (-2.5, 4) -- (-2.75, 4.5);
\filldraw (-3.75 ,3.75) circle [radius=0.1];
\draw (-3.75, 3.75) -- (-4.33, 4.33);
\draw (-3.75, 3.75) -- (-4.1, 4.45);
\draw (-3.75, 3.75) -- (-4.45, 4.1);
\filldraw (4,-2.5) circle [radius=0.1];
\draw (4, -2.5) -- (4.5,-2.5);
\draw (4, -2.5) -- (4.5,-2.25);
\draw (4, -2.5) -- (4.5,-2.75);
\filldraw (2.5,-4) circle [radius=0.1];
\draw (2.5, -4) -- (2.5, -4.5);
\draw (2.5, -4) -- (2.25, -4.5);
\draw (2.5, -4) -- (2.75, -4.5);
\filldraw (3.75 ,-3.75) circle [radius=0.1];
\draw (3.75, -3.75) -- (4.33, -4.33);
\draw (3.75, -3.75) -- (4.1, -4.45);
\draw (3.75, -3.75) -- (4.45, -4.1);
\end{tikzpicture}}
\hfill
\resizebox{0.6\textwidth}{!}{
\begin{tikzpicture}[baseline]
\filldraw (0, 0) circle [radius=0.1];
\filldraw (3,3) circle [radius=0.1];
\filldraw (-3,-3) circle [radius=0.1];
\filldraw (-3, 3) circle [radius=0.1];
\filldraw (3,-3) circle [radius=0.1];
\draw
	(0, 0) circle [fill=black, radius = 0.1] node[above = 3.75mm] {$\begin{pmatrix}1 & 0 \\ 0 & 1\end{pmatrix}$}
	(3,3) circle [radius = 0.1] node[right] {$\begin{pmatrix}1 & 0 \\ 0 & 3 \end{pmatrix}$}
--	(-3,-3) circle [radius = 0.1] node[left] {$\begin{pmatrix}1 & 0 \\ 1 & 3 \end{pmatrix}$}
	(3,-3) circle [radius = 0.1] node[right]  {$\begin{pmatrix}1 & 0 \\ 2 & 3 \end{pmatrix}$}
--	(-3,3) circle [radius = 0.1] node[left] {$\begin{pmatrix}3 & 0 \\ 0 & 1 \end{pmatrix}$};
\end{tikzpicture}}
\caption{Left: the building $\mathcal B_2$ for $K=\mathbb Q_3$. Right: the link of the identity in this building.}
\label{first-figure}
\end{figure}
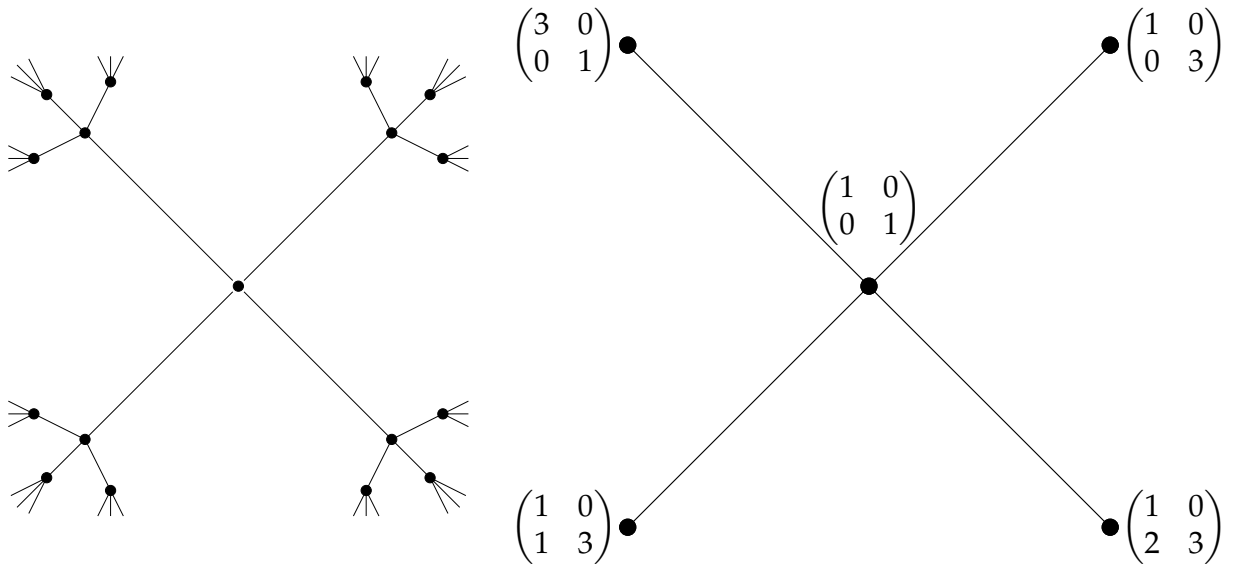

\end{example}

\begin{definition}
Let $M = (v_1, \dots, v_n)$ be a $d\times n$ matrix over $K$ with columns $v_1,\ldots, v_n$ spanning $K^d$ as a $K$-vector space, where $n>d$. The \emph{membrane} $[M]$ of $M$ is the collection of all lattice classes of the form $R\{\pi^{u_1}v_1,\ldots, \pi^{u_n}v_n\}$ for $u_i\in\mathbb Z$. If $M$ is square, so that $d = n$, we call the membrane $[M]$ an \emph{apartment}.
\end{definition}

\begin{lemma}[\cite{KT}, Lemma 4.13]
\label{membranes-apartments}
Let $M$ be a rank $d$ matrix over $K$ of size $d\times n$ with $n>d$. Then the membrane $[M]$ is the union of all apartments spanned by $d\times d$ invertible submatrices of $M$.
\end{lemma}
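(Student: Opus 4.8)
The plan is to establish the two inclusions $\bigcup_S [M_S] \subseteq [M]$ and $[M] \subseteq \bigcup_S [M_S]$ separately, where $S$ ranges over the $d$-element subsets of $\{1,\dots,n\}$ for which the submatrix $M_S$ is invertible. Only the second inclusion has any content.

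For $\bigcup_S [M_S] \subseteq [M]$, I would argue directly. Fix such an $S$ and a lattice class $R\{\pi^{a_i} v_i : i \in S\} \in [M_S]$. Since the columns indexed by $S$ form a $K$-basis of $K^d$, each remaining column can be written $v_j = \sum_{i \in S} c_{ij} v_i$ with $c_{ij} \in K$, for $j \notin S$. Put $u_i := a_i$ for $i \in S$, and for each $j \notin S$ choose $u_j \in \mathbb Z$ large enough that $u_j + \val(c_{ij}) \ge a_i$ for every $i \in S$. Then $\pi^{u_j} v_j \in R\{\pi^{a_i} v_i : i \in S\}$ for all $j \notin S$, while the generators indexed by $S$ agree on the two sides, so $R\{\pi^{u_1} v_1, \dots, \pi^{u_n} v_n\} = R\{\pi^{a_i} v_i : i \in S\}$, exhibiting the class as a point of $[M]$.

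For the reverse inclusion I would induct on $n$. When $n = d$ the matrix $M$ is invertible and $[M]$ is by definition the apartment spanned by $M$ itself, so there is nothing to prove. Suppose $n > d$ and let $\Lambda = R\{\pi^{u_1} v_1, \dots, \pi^{u_n} v_n\}$ be an arbitrary class of $[M]$; write $w_i := \pi^{u_i} v_i$. Because $w_1, \dots, w_n$ are $n > d$ vectors in $K^d$, they satisfy a nontrivial $K$-linear relation; after rescaling it by an appropriate integer power of $\pi$ we may assume it reads $\sum_{i=1}^n c_i w_i = 0$ with all $c_i \in R$ and $c_j \in R^*$ for some index $j$. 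From this, $w_j = -c_j^{-1}\sum_{i \ne j} c_i w_i$ lies in the $R$-module generated by the other $w_i$, so $\Lambda = R\{\pi^{u_i} v_i : i \ne j\}$. Reading the same relation in $K^d$ and using $c_j \pi^{u_j} \ne 0$ gives that $v_j$ lies in the $K$-span of $\{v_i : i \ne j\}$, so the $d \times (n-1)$ matrix $M'$ with columns $(v_i)_{i \ne j}$ still has rank $d$. Since $n - 1 \ge d$, the inductive hypothesis applies to $M'$ and produces a $d$-subset $S \subseteq \{1,\dots,n\}\setminus\{j\}$ with $M_S = M'_S$ invertible and $\Lambda \in [M'_S] = [M_S]$, completing the induction.

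The crux — and the only place where care is genuinely needed — is the reverse inclusion: it is \emph{not} true that a lattice spanned by $n$ scalar multiples of the $v_i$ is spanned by some $d$ of those same multiples, so one cannot delete generators blindly. What makes the induction work is that a single relation plays two roles simultaneously: as an $R$-linear relation among the $w_i$ with a unit coefficient it lets us discard the generator $w_j$, and as a $K$-linear relation among the $v_i$ it certifies that the surviving $n-1$ columns still span $K^d$, so the induction never leaves the class of rank-$d$ matrices. The remaining point to check is the normalization of the relation so that some coefficient is a unit, which is routine: the coefficients lie in $K$ and are not all zero, so the least of their valuations is attained and one divides by that power of $\pi$.
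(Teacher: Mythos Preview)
The paper does not supply its own proof of this lemma; it is merely quoted from \cite{KT} and used as a black box. So there is nothing in the paper to compare against.

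Your argument is correct. Both inclusions are handled cleanly, and you have correctly identified that the only real content is the reverse inclusion. The induction on $n$ is the natural approach, and the key observation---that a single $K$-linear relation among the $w_i$, normalized so some coefficient is a unit of $R$, simultaneously lets you drop $w_j$ as an $R$-generator of $\Lambda$ and certifies that the remaining columns $v_i$ ($i\neq j$) still span $K^d$---is exactly the right engine. Your closing paragraph explaining why one cannot simply discard generators at will is also on point.

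One small stylistic remark: your base case $n=d$ lies formally outside the hypothesis $n>d$ of the stated lemma, but this is harmless since you are only using it to anchor the induction; the meaning is clear.
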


\begin{example}
\label{early-mem-example}
Let $K = \mathbb Q_2$, and consider the rank-2 building $\mathcal B_2$ over $K$. This is an infinite tree where every vertex has degree 3. Within $\mathcal B_2$, the matrix $\begin{pmatrix}1 & 0 & 1\\ 0 & 1 & 2\end{pmatrix}$ defines the membrane in Figure \ref{mem1}, a simplicial subtree of the building consisting of three infinite paths emanating from a vertex of degree 3.
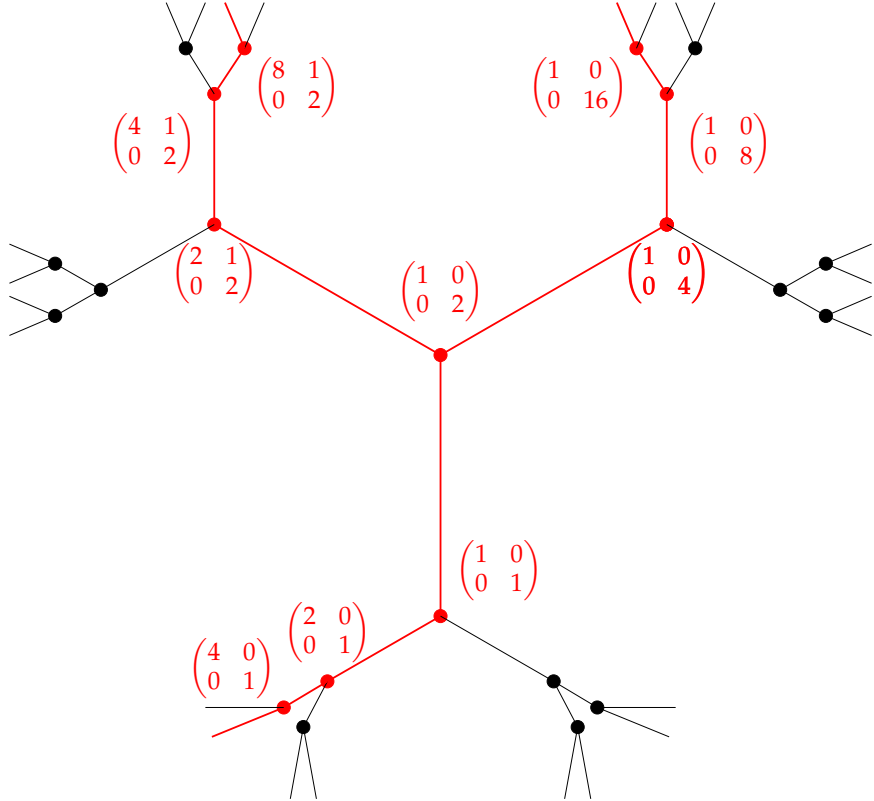
\begin{figure}[h!]
\centering
\resizebox{0.7\textwidth}{!}{
\begin{tikzpicture}
\filldraw[red] (0,0) circle [radius=0.1] node[above = 3.75mm] {$\begin{pmatrix}1 & 0 \\ 0 & 2\end{pmatrix}$};
\draw[red, thick] (0,0) -- (0, -4);
\draw[red, thick] (0,0) -- (3.464, 2);
\draw[red, thick] (0,0) -- (-3.464, 2);
\filldraw[red] (0,-4) circle [radius=0.1] node[above right = 1mm = 3.75mm] {$\begin{pmatrix}1 & 0 \\ 0 & 1\end{pmatrix}$};
\filldraw[red]  (-1.732, -5) circle [radius = 0.1] node[above=1.5mm] {$\begin{pmatrix}2 & 0 \\ 0 & 1\end{pmatrix}$};
\draw (-1.732, -5) -- (-2.1, -5.7);
\draw[red, thick] (-1.732, -5) -- (-2.4, -5.4);
\draw[red, thick] (0, -4) -- (-1.732, -5);
\draw (0, -4) -- (1.732, -5);
\filldraw  (-2.1, -5.7) circle [radius = 0.1];
\draw (-2.3, -6.8) -- (-2.1, -5.7);
\draw (-1.9, -6.8) -- (-2.1, -5.7);
\filldraw[red]  (-2.4, -5.4) circle [radius = 0.1] node[above left] {$\begin{pmatrix}4 & 0 \\ 0 & 1\end{pmatrix}$};
\draw[red, thick] (-2.4, -5.4) -- (-3.5, -5.85);
\draw (-2.4, -5.4) -- (-3.6, -5.4);
\filldraw  (1.732, -5) circle [radius = 0.1];
\filldraw  (2.1, -5.7) circle [radius = 0.1];
\draw (2.3, -6.8) -- (2.1, -5.7);
\draw (1.9, -6.8) -- (2.1, -5.7);
\filldraw  (2.4, -5.4) circle [radius = 0.1];
\draw (2.4, -5.4) -- (3.5, -5.85);
\draw (2.4, -5.4) -- (3.6, -5.4);
\draw (1.732, -5) -- (2.1, -5.7);
\draw (1.732, -5) -- (2.4, -5.4);
\filldraw[red] (3.464, 2) circle [radius=0.1] node[below = 1.5mm] {$\begin{pmatrix}1 & 0 \\ 0 & 4\end{pmatrix}$};
\filldraw[red] (3.464, 2) circle [radius=0.1] node[below = 1.5mm] {$\begin{pmatrix}1 & 0 \\ 0 & 4\end{pmatrix}$};
\filldraw[red] (3.464, 2) circle [radius=0.1] node[below = 1.5mm] {$\begin{pmatrix}1 & 0 \\ 0 & 4\end{pmatrix}$};
\filldraw[red] (-3.464, 2) circle [radius=0.1] node[below = 1.5mm] {$\begin{pmatrix}2 & 1 \\ 0 & 2\end{pmatrix}$};
\filldraw[red] (3.464, 4) circle [radius = 0.1] node[below right = 1.5mm]{$\begin{pmatrix}1 & 0 \\ 0 & 8\end{pmatrix}$};
\draw[red, thick] (3.464, 2) -- (3.464, 4);
\filldraw (5.2, 1) circle [radius = 0.1];
\draw (3.464, 2) -- (5.2, 1);
\filldraw[red] (3, 4.7) circle [radius = 0.1] node[below left ] {$\begin{pmatrix}1 & 0 \\ 0 & 16\end{pmatrix}$};
\draw[red, thick] (3.464, 4) -- (3,4.7);
\draw[red, thick](2.7, 5.4) -- (3, 4.7);
\draw(3.3, 5.4) -- (3, 4.7);
\draw(3.464, 4) -- (3.9, 4.7);
\draw(4.2, 5.4) -- (3.9, 4.7);
\draw(3.6, 5.4) -- (3.9, 4.7);
\filldraw (3.9, 4.7) circle [radius = 0.1];
\filldraw (5.9, 0.6) circle [radius = 0.1];
\filldraw (5.9, 1.4) circle [radius = 0.1];
\draw (5.2, 1) -- (5.9, 0.6);
\draw (5.2, 1) -- (5.9, 1.4);
\draw (6.6, 0.9) -- (5.9, 0.6);
\draw (6.6, 0.3) -- (5.9, 0.6);
\draw (6.6, 1.1) -- (5.9, 1.4);
\draw (6.6, 1.7) -- (5.9, 1.4);
\filldraw[red] (-3.464, 4) circle [radius = 0.1] node[below left = 1.5mm]{$\begin{pmatrix}4 & 1 \\ 0 & 2\end{pmatrix}$};
\draw[red, thick] (-3.464, 2) -- (-3.464, 4);
\filldraw (-5.2, 1) circle [radius = 0.1];
\draw (-3.464, 2) -- (-5.2, 1);
\filldraw[red] (-3, 4.7) circle [radius = 0.1] node[below right ] {$\begin{pmatrix}8 & 1 \\ 0 & 2\end{pmatrix}$};
\draw[red, thick] (-3.464, 4) -- (-3,4.7);
\draw(-2.7, 5.4) -- (-3, 4.7);
\draw[red, thick](-3.3, 5.4) -- (-3, 4.7);
\draw(-3.464, 4) -- (-3.9, 4.7);
\draw(-4.2, 5.4) -- (-3.9, 4.7);
\draw(-3.6, 5.4) -- (-3.9, 4.7);
\filldraw (-3.9, 4.7) circle [radius = 0.1];
\filldraw (-5.9, 0.6) circle [radius = 0.1];
\filldraw (-5.9, 1.4) circle [radius = 0.1];
\draw (-5.2, 1) -- (-5.9, 0.6);
\draw (-5.2, 1) -- (-5.9, 1.4);
\draw (-6.6, 0.9) -- (-5.9, 0.6);
\draw (-6.6, 0.3) -- (-5.9, 0.6);
\draw (-6.6, 1.1) -- (-5.9, 1.4);
\draw (-6.6, 1.7) -- (-5.9, 1.4);
\end{tikzpicture}}
\caption{A membrane, in red,  contained in the building $\mathcal B_2$ over $K = \mathbb Q_2$.}
\label{mem1}
\end{figure}
\end{example}

\begin{definition}
If $\Lambda_1$ and $\Lambda_2$ are lattices, then their intersection $\Lambda_1\cap \Lambda_2$ is also a lattice. We say that a collection of lattice classes is \emph{convex} if it is closed under taking intersections of a finite subset of representatives. 

Given a finite collection of lattices $\Lambda_1,\ldots, \Lambda_s$, we call their \emph{convex hull} $\conv(\Lambda_1, \dots, \Lambda_s)$ the smallest convex set containing their lattice classes. We can similarly define the convex hull of an infinite collection of lattices. In addition, given invertible matrices $M_1,\dots, M_s$, we write $\conv(M_1,\dots, M_s)$ for the convex hull $\conv(\Lambda_1,\dots,\Lambda_s)$ where each $\Lambda_i$ is the lattice spanned by the columns of $M_i$.
\end{definition}

\begin{remark}
Our notion of convexity corresponds to \emph{min-convexity} in the language of \cite{JSY}. There is another notion of convexity called \emph{max-convexity} which arises by considering sums of lattices instead of intersections. The duality functor $\Lambda\mapsto \Lambda^*=\text{Hom}_R(\Lambda, R)$ switches sums and intersections, so via this map the max-convex hull $\text{maxconv}(\Lambda_1,\dots, \Lambda_s)$ is isomorphic to $\conv(\Lambda_1^*,\dots, \Lambda_s^*)$. In particular, we may restrict our attention to convex hulls, and everything that follows can easily be translated to the language of max-convexity.
\end{remark}

\begin{example}
\label{early-convex-example}
Let $K = \mathbb Q_5$ and consider the matrices 
\[M_1 = \begin{pmatrix}1 & 0 & 0 \\ 0 & 1 & 0\\ 0 & 0 & 1\end{pmatrix}, M_2 = \begin{pmatrix} 1 & 0 & 0\\ 0 & \frac 1 5 & 0\\ 0 & 0 & \frac{1}{125}\end{pmatrix}, M_3 = \begin{pmatrix} 5 & 625 & 150 \\ 0 & 25 & 1\\ 0 & 0 & \frac 1 5\end{pmatrix}.\]
In Example \ref{conv-triangle-ex1} we will see that the convex hull $\conv(M_1,M_2,M_3)$ contains nine vertices, fifteen edges, and seven triangles, with the simplicial complex structure shown in Figure \ref{convex-triangle-diagram}.
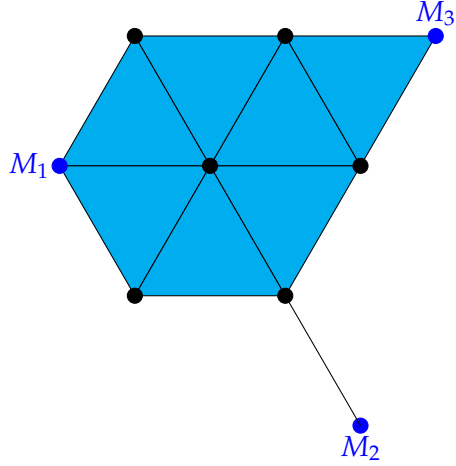
\begin{figure}[h!]
\begin{tikzpicture}
\filldraw[fill=cyan, draw=black] (0,0) -- (2,0) -- (1, 1.728) -- (0,0);
\filldraw[fill=cyan, draw=black] (0,0) -- (2,0) -- (1, -1.728) -- (0,0);
\filldraw[fill=cyan, draw=black] (3, 1.728) -- (2,0) -- (1, 1.728) -- (3, 1.728);
\filldraw[fill=cyan, draw=black] (0,0) -- (-1, 1.728) -- (1, 1.728) -- (0,0);
\filldraw[fill=cyan, draw=black] (0,0) -- (-1, 1.728) -- (-2, 0) -- (0,0);
\filldraw[fill=cyan, draw=black] (0,0) -- (-1, -1.728) -- (-2, 0) -- (0,0);
\filldraw[fill=cyan, draw=black] (0,0) -- (-1, -1.728) -- (1, -1.728) -- (0,0);
\filldraw (0,0) circle [radius = 0.1];
\filldraw[blue] (-2,0) circle [radius = 0.1] node[left] {$M_1$};
\filldraw (2,0) circle [radius = 0.1];
\filldraw (-1,-1.728) circle [radius = 0.1];
\filldraw (1,-1.728) circle [radius = 0.1];
\filldraw (-1,1.728) circle [radius = 0.1];
\filldraw (1,1.728) circle [radius = 0.1];
\filldraw[blue] (3,1.728) circle [radius = 0.1] node[above] {$M_3$};
\filldraw[blue] (2,-3.456) circle [radius = 0.1] node[below] {$M_2$};
\draw (1, -1.728) -- (2, -3.456);
\end{tikzpicture}
\caption{The convex hull of the matrices $M_1, M_2,$ and $M_3$ in $\mathcal B_3$.}
\label{convex-triangle-diagram}
\end{figure}
\end{example}

\begin{lemma}
\label{min-conv-decomposition}
Let $\Lambda_1,\dots, \Lambda_s$ be a finite collection of lattices. Then 
\[\conv(\Lambda_1,\dots, \Lambda_s) = \bigcup_{\Lambda'\in \conv(\Lambda_2,\dots,\Lambda_s)}\conv(\Lambda_1,\Lambda').\]
\end{lemma}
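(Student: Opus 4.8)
The plan is to first record an explicit description of the min-convex hull and then read off the lemma from it. Concretely, I would show that for any finite collection of lattices $\Lambda_1,\dots,\Lambda_s$,
\[
\conv(\Lambda_1,\dots,\Lambda_s)=\left\{\left[\bigcap_{i=1}^{s}\pi^{k_i}\Lambda_i\right]:k_1,\dots,k_s\in\mathbb{Z}\right\}=:F .
\]
That $F$ is convex uses two observations: every representative of a class in $F$ is again literally of the form $\bigcap_i\pi^{k_i}\Lambda_i$ (multiplying a lattice by a unit does nothing, and multiplying by $\pi^m$ just shifts all exponents), and
\[
\Bigl(\bigcap_{i}\pi^{k_i}\Lambda_i\Bigr)\cap\Bigl(\bigcap_{i}\pi^{l_i}\Lambda_i\Bigr)=\bigcap_{i}\bigl(\pi^{k_i}\Lambda_i\cap\pi^{l_i}\Lambda_i\bigr)=\bigcap_{i}\pi^{\max(k_i,l_i)}\Lambda_i\in F,
\]
so $F$ is closed under pairwise, hence finite, intersection of representatives. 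Moreover $F\ni[\Lambda_j]$, since taking $k_j=0$ and $k_i\ll 0$ for $i\neq j$ makes $\pi^{k_i}\Lambda_i\supseteq\Lambda_j$ (any two lattices in $K^d$ are commensurable), whence $\bigcap_i\pi^{k_i}\Lambda_i=\Lambda_j$. Finally, any convex set containing $[\Lambda_1],\dots,[\Lambda_s]$ contains $[\pi^{k_i}\Lambda_i]=[\Lambda_i]$ for all $i$ and is closed under intersecting representatives, so it contains every $[\bigcap_i\pi^{k_i}\Lambda_i]$; thus $F$ is the smallest convex set through the $[\Lambda_i]$, i.e.\ $F=\conv(\Lambda_1,\dots,\Lambda_s)$.

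Granting this, both inclusions of the lemma are immediate. For ``$\supseteq$'': $\conv(\Lambda_2,\dots,\Lambda_s)$, as the convex hull of a subcollection, lies in $C:=\conv(\Lambda_1,\dots,\Lambda_s)$, and $[\Lambda_1]\in C$; since $C$ is convex it contains the smallest convex set through $[\Lambda_1]$ and $[\Lambda']$ for each $\Lambda'\in\conv(\Lambda_2,\dots,\Lambda_s)$, i.e.\ $\conv(\Lambda_1,\Lambda')\subseteq C$, so the right-hand union lies in $C$. For ``$\subseteq$'': a point of $C$ can be written as $\bigl[\bigcap_{i=1}^{s}\pi^{k_i}\Lambda_i\bigr]$; setting $\Lambda':=\bigcap_{i=2}^{s}\pi^{k_i}\Lambda_i$, the explicit description gives $[\Lambda']\in\conv(\Lambda_2,\dots,\Lambda_s)$, and since $\conv(\Lambda_1,\Lambda')=\{[\pi^{a}\Lambda_1\cap\pi^{b}\Lambda']:a,b\in\mathbb{Z}\}$ we get $\bigl[\bigcap_{i=1}^{s}\pi^{k_i}\Lambda_i\bigr]=[\pi^{k_1}\Lambda_1\cap\Lambda']\in\conv(\Lambda_1,\Lambda')$, exhibiting the point in the right-hand union.

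I expect the only real subtlety to be the bookkeeping in the explicit description of $\conv$: precisely, the point that passing to any representative of a class in $F$ keeps it in the normal form $\bigcap_i\pi^{k_i}\Lambda_i$, which is what reduces closure under intersecting representatives to the one-line identity above. Everything downstream is formal. (One could instead try to prove directly that the right-hand union $E$ is convex, but absorbing a term $\conv(\Lambda'_1,\Lambda'_2)\subseteq\conv(\Lambda_2,\dots,\Lambda_s)$ forces one to invoke the lemma already for collections of size three, making that route essentially circular; establishing the explicit description of $\conv$ cleanly avoids this.)
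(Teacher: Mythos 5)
Your proof is correct and follows the same underlying approach as the paper's: both decompose $\bigcap_{i=1}^s\pi^{a_i}\Lambda_i$ as $\pi^{a_1}\Lambda_1\cap\Lambda'$ with $\Lambda'=\bigcap_{i\geq 2}\pi^{a_i}\Lambda_i$ and check the two inclusions. The one difference is that you explicitly justify the normal-form description $\conv(\Lambda_1,\dots,\Lambda_s)=\{[\bigcap_i\pi^{k_i}\Lambda_i]:k_i\in\mathbb Z\}$ (via the identity $\pi^{k}\Lambda\cap\pi^{l}\Lambda=\pi^{\max(k,l)}\Lambda$ and commensurability of lattices), whereas the paper takes it as given; this makes your write-up more self-contained without changing the argument.
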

\begin{proof}
Pick any class $V$ in $\conv(\Lambda_1,\dots, \Lambda_s)$ with representative $\pi^{a_1}\Lambda_1\cap\pi^{a_2}\Lambda_2\cap\dots\cap \pi^{a_s}\Lambda_s$. Clearly $\Lambda'=\pi^{a_2}\Lambda_2\cap\dots\cap \pi^{a_s}\Lambda_s$ satisfies $[\Lambda']\in \conv(\Lambda_2,\dots,\Lambda_s)$, so $V\in \conv(\Lambda_1,\Lambda')$. Conversely, fix a lattice $\Lambda'=\pi^{a_2'}\Lambda_2\cap\dots\cap \pi^{a_s'}\Lambda_s$ representing a class in $\conv(\Lambda_2,\dots, \Lambda_s)$. Any class $V$ in $\conv(\Lambda_1,\Lambda')$ has a representative of the form $\pi^{a_1}\Lambda_1 \cap \pi^{b}\Lambda'= \pi^{a_1}\Lambda_1\cap\pi^{a_2}\Lambda_2\cap\dots\cap \pi^{a_s}\Lambda_s$, where $a_i = b+a_i'$ for $i=2,\dots, s$. In particular, $V$ is certainly in $\conv(\Lambda_1,\dots,\Lambda_s)$.
\end{proof}

The following result was originally stated in Faltings's paper on matrix singularities \cite{F}. For completeness we provide an easy proof.

\begin{prop}
\label{finiteness}
Let $\Lambda_1,\ldots, \Lambda_s$ be a finite collection of lattices representing equivalence classes in $\mathcal B_d$. Then $\conv(\Lambda_1,\ldots, \Lambda_s)$ is finite.
\end{prop}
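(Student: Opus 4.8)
The plan is to induct on $s$, using Lemma~\ref{min-conv-decomposition} to peel off one lattice at a time and reduce everything to the case of two lattices. For $s=1$ the convex hull is the single class $\{[\Lambda_1]\}$, so there is nothing to prove. For the inductive step, Lemma~\ref{min-conv-decomposition} expresses $\conv(\Lambda_1,\dots,\Lambda_s)$ as a union of the two-point convex hulls $\conv(\Lambda_1,\Lambda')$ as $\Lambda'$ runs over representatives of classes in $\conv(\Lambda_2,\dots,\Lambda_s)$. I would first observe that $\conv(\Lambda_1,\Lambda')$ depends only on the class $[\Lambda']$: replacing $\Lambda'$ by $\pi^e\Lambda'$ (or by $u\Lambda'$ for a unit $u$) merely reindexes the lattices $\pi^a\Lambda_1\cap\pi^b\Lambda'$. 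Hence the union is genuinely indexed by the \emph{set} of classes $\conv(\Lambda_2,\dots,\Lambda_s)$, which is finite by the inductive hypothesis, and it suffices to prove that the convex hull of two lattices is always finite; then $\conv(\Lambda_1,\dots,\Lambda_s)$ is a finite union of finite sets.

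To handle two lattices $\Lambda_1,\Lambda_2$, I would invoke the elementary divisor theorem: there is an $R$-basis $v_1,\dots,v_d$ of $\Lambda_1$ and integers $c_1\le\cdots\le c_d$ with $\pi^{c_1}v_1,\dots,\pi^{c_d}v_d$ an $R$-basis of $\Lambda_2$. One checks that $S=\{\,[\pi^a\Lambda_1\cap\pi^b\Lambda_2]:a,b\in\Zet\,\}$ equals $\conv(\Lambda_1,\Lambda_2)$: it is convex because the intersection of representatives $\pi^{A_1}\Lambda_1\cap\pi^{B_1}\Lambda_2$ and $\pi^{A_2}\Lambda_1\cap\pi^{B_2}\Lambda_2$ is $\pi^{\max(A_1,A_2)}\Lambda_1\cap\pi^{\max(B_1,B_2)}\Lambda_2\in S$ (and rescaling a representative keeps this shape, since $u\Lambda=\Lambda$ for a unit $u$); it contains $[\Lambda_1]$ and $[\Lambda_2]$ (take $a$ large with $\pi^a\Lambda_1\subseteq\Lambda_2$, and symmetrically); and it is contained in any convex set containing $[\Lambda_1],[\Lambda_2]$, since $\pi^a\Lambda_1$ and $\pi^b\Lambda_2$ represent those two classes. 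In the basis $v_1,\dots,v_d$ one has $\pi^a\Lambda_1\cap\pi^b\Lambda_2=R\{\pi^{\max(a,\,b+c_1)}v_1,\dots,\pi^{\max(a,\,b+c_d)}v_d\}$, and rescaling by $\pi^{-a}$ shows the class depends only on $t:=b-a$ through the tuple $\bigl(\max(0,t+c_1),\dots,\max(0,t+c_d)\bigr)$. For $t\le -c_d$ this tuple is $(0,\dots,0)$ (giving $[\Lambda_1]$), and for $t\ge -c_1$ it is $(t+c_1,\dots,t+c_d)$, whose class is $[\pi^t\Lambda_2]=[\Lambda_2]$; so $S$ has at most $c_d-c_1+1$ elements and in particular is finite.

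The only ingredient that is not pure bookkeeping is the elementary divisor theorem over the discrete valuation ring $R$ (equivalently, Smith normal form for the change-of-basis matrix between $\Lambda_1$ and $\Lambda_2$), which is standard. I expect the main point requiring care to be the reduction itself: verifying that $\conv(\Lambda_1,\Lambda')$ is well defined on lattice \emph{classes} and that the union in Lemma~\ref{min-conv-decomposition} is therefore indexed by a finite set, so that the ``finite union of finite sets'' argument legitimately applies and the induction closes.
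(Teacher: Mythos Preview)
Your proof is correct and follows essentially the same strategy as the paper: induct on $s$ via Lemma~\ref{min-conv-decomposition}, reducing to the finiteness of $\conv(\Lambda_1,\Lambda_2)$. The paper handles the two-lattice case more tersely, simply noting that $\Lambda_1\cap\pi^a\Lambda_2$ stabilizes to $\pi^a\Lambda_2$ for $a\gg 0$ and to $\Lambda_1$ for $a\ll 0$, whereas you unpack this via elementary divisors to get the explicit bound $c_d-c_1+1$; your extra care in checking that $\conv(\Lambda_1,\Lambda')$ depends only on $[\Lambda']$ is a detail the paper leaves implicit.
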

\begin{proof}
Any class in $\conv(\Lambda_1,\Lambda_2)$ has a representative of the form $\Lambda_1\cap\pi^a\Lambda_2$. For $a\gg 0$ we know that $\Lambda_1\supseteq \pi^a\Lambda_2$, and for $a\ll 0$ we know $\Lambda_1\subseteq \pi^a\Lambda_2$. Hence the convex hull of two lattices is finite. The result then follows by induction and Lemma \ref{min-conv-decomposition}.
\end{proof}

It is therefore natural to ask how to compute a convex hull. In fact, the building $\mathcal B_d$ and membranes have an innate tropical structure which can be exploited for this purpose.

\subsection{Tropical basics}
We next review some basics of tropical convexity. For a more detailed exposition of this material, see \cite[Chapter 4]{MS} or \cite[Chapter 5]{J}.

We work over the tropical semiring $\mathbb{T} = (\mathbb R\cup \{\infty\}, \oplus, \odot)$ with the min-plus convention. In this semiring, the basic arithmetic operations of addition and multiplication are redefined:
\[a\oplus b:=\min(a,b), \, a\odot b := a + b \text{\ \ where }a, b\in\mathbb T.\]

The \emph{tropical projective space} $\TP^{n-1}$ is the space $(\mathbb T^n - (\infty, \infty, \ldots, \infty))/{\bf 1 \mathbb R}$, where $\textbf 1=(1,1, \ldots,1)$ is the all-ones vector. When illustrating this space, as in Figure \ref{standard-triangulation-TP2}, we always choose the affine chart in which the first coordinate is 0. There is a tropical distance metric $d_{tr}$ in $\mathbb {TP}^{n-1}$, given by
\[d_{tr}(v,w) := \max\{|v_i-v_j+w_j-w_i|: 1\leq i < j \leq n\}.\]
The lattice of integral points $\mathbb Z^n\subseteq \TP^{n-1}$ forms the skeleton of a flag simplicial complex, with a 1-simplex between two lattice points if they are of tropical distance 1 apart. This is called the \emph{standard triangulation} of $\TP^{n-1}$.

\begin{figure}[h!]
\begin{tikzpicture}
\foreach \x in {-1,...,1}{
	\foreach \y in {-1, ..., 1} {
		\filldraw (2*\x,2*\y) circle [radius = 0.1];
	}
}
\foreach \x in {-1,...,1}{
	\draw (2*\x, -3) -- (2*\x, 3);
}
\foreach \y in {-1,...,1}{
	\draw (-3, 2*\y) -- (3, 2*\y);
}
\draw (1, -3) -- (3, -1);
\draw (-1, -3) -- (3, 1);
\draw (-3, -1) -- (1, 3);
\draw (-3, 1) -- (-1, 3);
\draw (-3,-3) -- (3,3);
\filldraw[cyan] (0,0) circle [radius = 0.1] node[below right] {$(0,0,0)$};
\end{tikzpicture}
\caption{The standard triangulation of $\mathbb{TP}^2$, with the origin colored cyan.}
\label{standard-triangulation-TP2}
\end{figure}
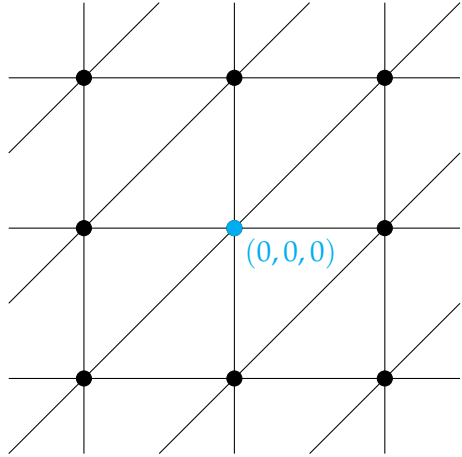

Given a collection $\mathcal P$ of points in $\mathbb{TP}^{n-1}$, we define their \emph{tropical convex hull} or \emph{tropical polytope} as the tropical semimodule spanned by these points, i.e.:
\[\tconv(\mathcal P) = \{\lambda_1\odot p^{(1)}\oplus\dots\oplus \lambda_s\odot p^{(s)}: \lambda_i\in\mathbb T, p^{(i)}\in \mathcal P\}.\]
If $\mathcal P\subseteq \mathbb Z^n$, we call their tropical convex hull a \emph{tropical lattice polytope}.

A map $p$ from the set of all $d$-sized subsets of $[n]$ to $\mathbb T$ satisfying the following exchange relation is called a \emph{valuated matroid} \cite{DT} or \emph{tropical Pl\"ucker vector}: for any $(d-1)$-subset $\sigma$ and any $(d+1)$-subset $\tau$ of $[n]$, the minimum
\[\min\{p(\sigma\cup\{\tau_i\}) + p(\tau - \{\tau_i\}): i\in [d+1]\}\]
is attained at least twice. (By convention we say that $p(\sigma)=\infty$ if $\sigma$ has size less than $d$.)

A tropical Pl\"ucker vector $p$ gives rise to a \emph{tropical linear space} $L$, consisting of all points $x\in \TP^{n-1}$ such that, for any $(d+1)$-subset $\tau$ of $[e]$, the minimum of the numbers $p(\tau-\{\tau_i\})+ x_{\tau_i}$, for $i = 1,\ldots, d, d+1$, is attained at least twice. Given a tropical linear space $L$, there is a projection map $pr_{L}$ taking a point $x\in \mathbb{TP}^{n-1}$ to a nearest point $pr_{L}(x)\in L$, which can be evaluated via the \emph{red rule} or \emph{blue rule} \cite[Theorem 15]{JSY}. We state here the blue rule, which gives a formula for the $i$th coordinate of $pr_{L}(x)$ in terms of an optimization over all $(d-1)$-sized subsets $\sigma$ of $[n]$:
\[pr_{L}(x)_i = \min_\sigma \max_{j\not\in \sigma}(p(\sigma\cup\{i\})-p(\sigma\cup\{j\})+x_j).\]
The standard triangulation of $\mathbb{TP}^{n-1}$ descends to a standard triangulation of any tropical convex hull of lattice points or of any tropical linear space $L$ with Pl\"ucker vector image in $\mathbb Z\cup\{\infty\}$. 

One important class of tropical linear spaces arises as follows. Let $M = (v_1,\ldots, v_n)$ be a $d\times n$ matrix of rank $d$ over $K$. Then $M$ defines a tropical Pl\"ucker vector $p$ (and hence tropical linear space $L$) as follows: if $\omega$ is a collection of $d$ integers in $[n]$, then $M_\omega$ denotes the corresponding $d\times d$ submatrix, and $p(\omega):=\val(\det(M_\omega))$. A tropical linear space obtained in this way is called a \emph{tropicalized linear space}.

\begin{example}
Let $M$ be the matrix over $\mathbb Q_2$ from Example \ref{early-mem-example}: $M = \begin{pmatrix}1 & 0 & 1\\ 0 & 1 & 2\end{pmatrix}$. We can compute the Pl\"ucker vector $p$ coming from $M$: 
\[p(\{1,2\}) = 0, p(\{1,3\})=1, p(\{2,3\})=0.\]
The corresponding tropical linear space $L$ consists of three rays emanating from the apex point $(0, -1, 0)$ in the $(0,0,1), (0,1,0),$ and $(1,0,0)$ directions of tropical projective space.
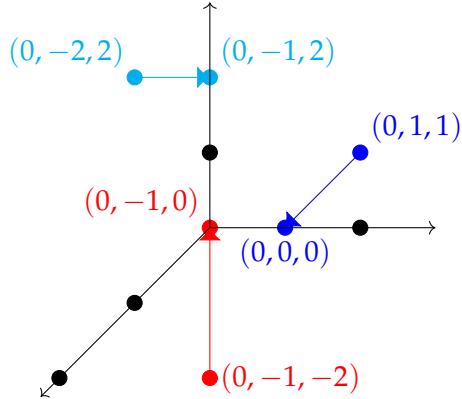
\begin{figure}[h!]
\centering
\begin{tikzpicture}
\filldraw[red] (0,1) circle [radius = 0.1] node[above left] {$(0, -1, 0)$};
\filldraw[red] (0,-1) circle [radius = 0.1] node[right] {$(0, -1, -2)$};
\filldraw (0,2) circle [radius = 0.1];
\filldraw[cyan] (0,3) circle [radius = 0.1] node[above right] {$(0, -1, 2)$};
\filldraw[blue] (1,1) circle [radius = 0.1] node[below] {$(0, 0,0)$};
\filldraw (2,1) circle [radius = 0.1];
\filldraw (-1,0) circle [radius = 0.1];
\filldraw (-2, -1) circle [radius = 0.1];
\filldraw[cyan] (-1, 3) circle [radius = 0.1] node[above left] {$(0, -2, 2)$};
\filldraw[blue] (2,2) circle [radius = 0.1] node[above right] {$(0, 1,1)$};
\draw [->] (0,1) -- (0,4);
\draw [->] (0,1) -- (-2.25, -1.25);
\draw [->] (0,1) -- (3,1);
\draw[cyan] [-{Latex[width=3mm]}] (-1, 3) -- (0,3);
\draw[blue] [-{Latex[width=3mm]}] (2,2) -- (1,1);
\draw[red] [-{Latex[width=3mm]}] (0,-1) -- (0,1);
\end{tikzpicture}
\caption{
The standard triangulation of the tropical linear space $L$ coming from the matrix $M$, along with projections of three lattice points not in $L$.
}
\label{tls1}
\end{figure}
\end{example}

We can now describe the tropical structure underlying the building $\mathcal B_d$. In effect, membranes are just standard triangulations of tropicalized linear spaces.

\begin{theorem}[\cite{KT}, Theorem 4.15]
\label{membrane-isomorphism-theorem}
Let $M = (v_1,\ldots, v_n)$ be a $d\times n$ matrix of rank $d$ over $K$ and let $L$ be its associated tropical linear space. Then there is an isomorphism $\Psi_M$ between the membrane $[M]$ and the standard triangulation of $L$,
\[\Psi_M(R\{\pi^{-u_1}v_1,\ldots, \pi^{-u_n}v_n\}) := pr_{L}((u_1,\ldots, u_n)),\]
sending a lattice $R\{\pi^{-u_1}v_1,\ldots, \pi^{-u_n}v_n\}$ to the projection onto $L$ of the point $(u_1, \dots, u_n)\in\mathbb{TP}^{n-1}$.
\end{theorem}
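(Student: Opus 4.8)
The plan is to show that $\Psi_M$ restricts to a bijection from the vertices of $\mathcal B_d$ lying in the membrane $[M]$ onto the vertices $L\cap\mathbb Z^n$ of the standard triangulation of $L$, and that this bijection respects the two edge relations; since $\mathcal B_d$ and the standard triangulation of $L$ are both flag complexes, this promotes $\Psi_M$ to a simplicial isomorphism. The bookkeeping is organized around the \emph{order vector} of a lattice $\Lambda$ in the membrane: set $\nu(\Lambda)=(\nu_1,\dots,\nu_n)$ with $\nu_i:=\max\{m\in\mathbb Z:v_i\in\pi^m\Lambda\}$. Replacing $\Lambda$ by $c\Lambda$ shifts $\nu(\Lambda)$ by $\val(c)\mathbf 1$, so $[\nu(\Lambda)]$ is a well-defined point of $\TP^{n-1}$. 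The whole theorem follows from two claims: \textbf{(B)} every $\Lambda\in[M]$ satisfies $\Lambda=R\{\pi^{-\nu_1(\Lambda)}v_1,\dots,\pi^{-\nu_n(\Lambda)}v_n\}$; and \textbf{(A)} for every $u\in\mathbb Z^n$ one has $pr_L\big((u_1,\dots,u_n)\big)=\big[\nu\big(R\{\pi^{-u_1}v_1,\dots,\pi^{-u_n}v_n\}\big)\big]$ in $\TP^{n-1}$.

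Claim (B) is elementary: by definition of $\nu_i$ each $\pi^{-\nu_i}v_i$ lies in $\Lambda$, while if $\Lambda=R\{\pi^{-u_i}v_i\}$ is any presentation of $\Lambda$ as a membrane lattice then $\nu_i\ge u_i$, so $\pi^{-u_i}v_i=\pi^{\nu_i-u_i}(\pi^{-\nu_i}v_i)\in R\cdot\pi^{-\nu_i}v_i$; each lattice therefore contains a generating set of the other, and they coincide. Granting (A) and (B), the required properties follow. \emph{Well-definedness}: by (A) the value of $\Psi_M$ on a membrane lattice depends only on its order vector, hence only on its class. \emph{Injectivity}: if membrane lattices $\Lambda,\Lambda'$ have $[\nu(\Lambda)]=[\nu(\Lambda')]$, then $\nu(\Lambda')=\nu(\Lambda)+c\mathbf 1$ for some $c\in\mathbb Z$, and (B) gives $\Lambda'=\pi^{-c}\Lambda$. \emph{Surjectivity onto $L\cap\mathbb Z^n$}: for $w\in L\cap\mathbb Z^n$ the lattice $R\{\pi^{-w_i}v_i\}$ lies in $[M]$ by definition, and (A) together with the fact that $pr_L$ fixes $L$ pointwise (being the nearest-point map, and $d_{tr}$ a metric) shows $\Psi_M$ sends it to $w$. \emph{Edge relation}: if adjacent classes $[\Lambda],[\Lambda']$ are given with representatives satisfying $\pi\Lambda\subseteq\Lambda'\subseteq\Lambda$, then $\nu_i(\Lambda)-\nu_i(\Lambda')\in\{0,1\}$ for every $i$, with both values occurring when $[\Lambda]\ne[\Lambda']$, so $d_{tr}(\Psi_M\Lambda,\Psi_M\Lambda')=1$; conversely, if $d_{tr}(\Psi_M\Lambda,\Psi_M\Lambda')=1$ then after rescaling $\nu(\Lambda)-\nu(\Lambda')\in\{0,1\}^n$, and substituting the presentations from (B) shows at once that $\pi\Lambda\subseteq\Lambda'\subseteq\Lambda$. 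Since both complexes are flag, matching the edges suffices.

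It remains to prove Claim (A), which is the heart of the matter. Write $\Lambda(u)=R\{\pi^{-u_1}v_1,\dots,\pi^{-u_n}v_n\}=M(D_u)$, where $D_u=\bigoplus_j\pi^{-u_j}R\subseteq K^n$ and $M:K^n\to K^d$ is the surjection with $Me_j=v_j$. For a fixed $i$ the condition $\pi^{-m}v_i\in\Lambda(u)$ unwinds, after writing the preimage as $\pi^m\epsilon$, to the existence of $\epsilon\in K^n$ with $M\epsilon=v_i$ and $\val(\epsilon_j)+u_j\ge m$ for all $j$; since $\{\epsilon:M\epsilon=v_i\}=e_i+\ker M$, this yields the identity
\[
\nu_i(\Lambda(u))\;=\;\max_{z\in\ker M}\ \min_{1\le j\le n}\Big(\val\big((e_i+z)_j\big)+u_j\Big).
\]
The task is to match this max--min quantity with the min--max expression in the blue rule for $pr_L(u)_i$. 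This is a duality between the two evaluation rules for $pr_L$: the blue rule uses the tropical Pl\"ucker vector $p(\omega)=\val\det M_\omega$ of the row space of $M$, whereas the displayed identity is a form of the \emph{red rule}, phrased through the kernel $\ker M$ (the orthogonal complement of that row space). By \cite[Theorem 15]{JSY} both rules compute $pr_L$, so (A) reduces to recognizing the displayed identity as (a form of) the red rule for $pr_L$.

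I expect the main obstacle to be carrying out this identification rigorously. One option is to translate the displayed kernel max--min into the cocircuit form in which the red rule is usually stated, using that the max--min over $\ker M$ is unchanged upon restricting to kernel vectors of minimal support. The self-contained alternative is to prove the equality of the max--min and min--max formulas directly: pick a basis $\omega$ realizing the minimum in the blue rule for coordinate $i$, expand $v_j=\sum_{k\in\omega}\lambda_{jk}v_k$ with $\val(\lambda_{jk})=p((\omega\setminus k)\cup j)-p(\omega)$ by Cramer's rule, and exhibit an explicit $z\in\ker M$ whose coordinates meet the required valuation bounds; the subtle point there is controlling the coordinates $z_k$ for $k\in\omega$, which is precisely where the optimality of $\omega$ — equivalently, the fact that $p$ obeys the tropical Pl\"ucker relations — must be invoked rather than a naive exchange-property estimate. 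As a consistency check and possible fallback, the apartment case $n=d$ (where $L=\TP^{d-1}$, $pr_L=\id$, the $v_i$ form a basis, and (A), (B) are transparent) combined with Lemma~\ref{membranes-apartments} offers an independent route: glue the obvious apartment isomorphisms $\Psi_{M_\omega}$ and check their compatibility on overlaps.
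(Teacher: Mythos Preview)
The paper does not prove this statement; it is quoted as \cite[Theorem~4.15]{KT} and used as a black box, so there is no in-paper argument to compare your proposal against.

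On the proposal itself: the overall architecture---introducing the order vector $\nu(\Lambda)$, reducing the simplicial-isomorphism claim to Claims (A) and (B), and deducing well-definedness, injectivity, surjectivity, and preservation of the edge relation from them---is correct, and your proof of (B) is fine. The substantive gap is Claim (A). You derive the identity $\nu_i(\Lambda(u))=\max_{z\in\ker M}\min_j\big(\val((e_i+z)_j)+u_j\big)$ correctly, but you do not actually establish that this equals $pr_L(u)_i$: you explicitly say that carrying out this identification is ``the main obstacle'' and then sketch two possible approaches without executing either. That is precisely where the content of the theorem lives. The first route (restricting the maximum over $\ker M$ to vectors of minimal support, i.e.\ valuated cocircuits, and matching with the red rule) is viable but requires a genuine argument that the optimum is attained on cocircuits and that the resulting expression matches the red rule coordinatewise. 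The Cramer's-rule route is also viable, but---as you anticipate---controlling the valuations of $z_k$ for $k\in\omega$ is the nontrivial step and needs the tropical Pl\"ucker relations in an essential way. The apartment-gluing fallback via Lemma~\ref{membranes-apartments} is a legitimate strategy too, but the compatibility of the maps $\Psi_{M_\omega}$ on overlaps and the identification of the glued image with all of $L\cap\mathbb Z^n$ are themselves the heart of the matter and are not supplied. In short, the plan is sound, but the proof as written stops before the hard part.
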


As a first illustration of this theorem, note that Figures \ref{mem1} and \ref{tls1} are isomorphic as simplicial complexes. They are both trees comprising three infinite branches stemming from a single node.

\begin{example}
If our matrix $M$ is square, so that its membrane $[M]$ is actually an apartment in the building, then $\Psi_M$ describes a simplicial complex isomorphism between the apartment and the \emph{tropical projective torus} $\mathbb R^{n}/\mathbb R{\bf 1}$.
\end{example}

\begin{example}
Keep the notation of Theorem \ref{membrane-isomorphism-theorem}. Rinc\'on \cite{R} describes a local structure of any tropical linear space $L$ with Pl\"ucker vector $p$, in which a basis $\sigma$ of the underlying matroid yields a \emph{local tropical linear space} defined by
\[L_\sigma = \{u\in L: p(\sigma) - \sum_{i\in \sigma} u_i \leq p(\tau) - \sum_{j\in \tau} u_j\, \forall \text{ bases } \tau\}.\] 
These local tropical linear spaces $L_\sigma$ are isomorphic to Euclidean space $\mathbb R^{n-1}$, are contained in $L$, and together form a non-disjoint cover of $L$.

The covering of a membrane by its apartments derives from this local structure of tropical linear spaces. In particular, let $\sigma$ describe a basis of the matroid of $M$, so that the $d\times d$ matrix $M_\sigma$ with columns indexed by $\sigma$ is invertible. Then the linearity of the determinant over column sums implies that the apartment $[M_\sigma]$ is mapped by $\Psi_M$ to the local tropical linear space $L_\sigma$.
\end{example}

Given any membrane $[M]$ represented by a $d\times n$ matrix $M = (f_1, \ldots, f_n)$, there is a \emph{retraction} $r_M$ of the entire building $\mathcal B_d$ onto $[M]$, which restricts to the identity on $[M]$ itself:
\[r_M: \Lambda\mapsto (\Lambda\cap K \{f_1\}) + \cdots + (\Lambda \cap K\{f_n\}).\]
We may use this map to describe a tropical structure for convex hulls.

\begin{theorem}[\cite{JSY}, Proposition 22]
\label{minconv-isomorphism-theorem}
Let $M$ be a $d\times n$ matrix of rank $d$ over $K$, $[M]$ its corresponding membrane, and $L$ its corresponding tropical linear space. Also let $\Lambda_1,\ldots, \Lambda_s$ be lattices corresponding to points in $\mathcal B_d$. The following two simplicial complexes coincide:
\[r_M(\conv(\Lambda_1,\ldots, \Lambda_s))\subseteq [M],\]
\[\tconv(\Psi_M(r_M(\Lambda_1)),\ldots, \Psi_M(r_M(\Lambda_s)))\subseteq L.\]
\end{theorem}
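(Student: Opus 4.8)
The plan is to push everything through the simplicial isomorphism $\Psi_M$ of Theorem~\ref{membrane-isomorphism-theorem} and reduce the asserted coincidence of complexes to an equality of vertex sets. Both $r_M(\conv(\Lambda_1,\dots,\Lambda_s))\subseteq[M]$ and $\tconv(\Psi_M(r_M(\Lambda_1)),\dots,\Psi_M(r_M(\Lambda_s)))\subseteq L$ become subcomplexes of the standard triangulation of $L$, which is a flag complex; granting that each is moreover an \emph{induced} subcomplex — for $\tconv$ of lattice points this is standard, and for $r_M$ of a convex hull it comes out of the identification once it is established — it suffices to match $0$-cells. Proposition~\ref{finiteness} guarantees there are only finitely many, and since $r_M$ is simplicial the vertices of $r_M(\conv(\Lambda_1,\dots,\Lambda_s))$ are exactly the classes $r_M(V)$ for $V$ a vertex of $\conv(\Lambda_1,\dots,\Lambda_s)$.

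Write $M=(v_1,\dots,v_n)$. The first computation I would do is the effect of the retraction on a single lattice $\Lambda$. If $\Lambda\cap Kv_i=\pi^{c_i}Rv_i$, then the defining formula $r_M(\Lambda)=\sum_i(\Lambda\cap Kv_i)$ gives $r_M(\Lambda)=R\{\pi^{c_1}v_1,\dots,\pi^{c_n}v_n\}$, and a one-line module argument shows that in fact $r_M(\Lambda)\cap Kv_i=\Lambda\cap Kv_i$ for every $i$: the inclusion $\supseteq$ holds because $\Lambda\cap Kv_i$ is a summand of $r_M(\Lambda)$, and for $\subseteq$ one writes any $\alpha v_i\in r_M(\Lambda)\cap Kv_i$ as $\alpha v_i=\sum_j\beta_jv_j$ with $\beta_jv_j\in\Lambda$, so $\alpha v_i\in\Lambda$. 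Hence $(c_1,\dots,c_n)$ is the \emph{reduced} exponent vector of the membrane lattice $r_M(\Lambda)$, and I would argue next that the point $\mathbf{u}(\Lambda):=(-c_1,\dots,-c_n)$ already lies in $L$, so that $\Psi_M(r_M(\Lambda))=\mathbf{u}(\Lambda)$: since $pr_{L}$ is order-preserving and, in the min-plus convention, satisfies $pr_{L}(x)\ge x$ coordinatewise, while every exponent vector representing $r_M(\Lambda)$ dominates $(c_1,\dots,c_n)$, a squeeze forces $pr_{L}(\mathbf{u}(\Lambda))=\mathbf{u}(\Lambda)$. (This is in any case part of the content of Theorem~\ref{membrane-isomorphism-theorem} and \cite{KT}.)

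Now I would incorporate intersections and scalings. Given a nonempty $S\subseteq\{1,\dots,s\}$ and integers $a_j$ for $j\in S$, set $\Lambda_j\cap Kv_i=\pi^{c_i^{(j)}}Rv_i$. Because intersection is computed line by line and commutes with multiplication by powers of $\pi$,
\[
\Big(\bigcap_{j\in S}\pi^{a_j}\Lambda_j\Big)\cap Kv_i=\bigcap_{j\in S}\pi^{a_j+c_i^{(j)}}Rv_i=\pi^{\max_{j\in S}(a_j+c_i^{(j)})}Rv_i,
\]
so $r_M\big(\bigcap_{j\in S}\pi^{a_j}\Lambda_j\big)=R\{\pi^{\max_{j\in S}(a_j+c_i^{(j)})}v_i\}_i$, and applying the formula for $\Psi_M$ gives
\[
\Psi_M\Big(r_M\big(\textstyle\bigcap_{j\in S}\pi^{a_j}\Lambda_j\big)\Big)=pr_{L}\Big(\bigoplus_{j\in S}(-a_j)\odot\mathbf{u}(\Lambda_j)\Big)=\bigoplus_{j\in S}(-a_j)\odot\Psi_M(r_M(\Lambda_j)),
\]
the last step because each $\mathbf{u}(\Lambda_j)=\Psi_M(r_M(\Lambda_j))$ lies in $L$ and $L$ is tropically convex, so the argument of $pr_{L}$ is already fixed by it. Letting $S$ and the $a_j$ vary, Lemma~\ref{min-conv-decomposition} (or directly the definition of the convex hull) shows the classes of the lattices $\bigcap_{j\in S}\pi^{a_j}\Lambda_j$ are precisely the vertices of $\conv(\Lambda_1,\dots,\Lambda_s)$, while the tropical combinations $\bigoplus_{j\in S}(-a_j)\odot\Psi_M(r_M(\Lambda_j))$ are precisely the lattice points of $\tconv(\Psi_M(r_M(\Lambda_1)),\dots,\Psi_M(r_M(\Lambda_s)))$ — the input here being the standard fact that every lattice point of a tropical lattice polytope is an \emph{integral} tropical combination of its vertices. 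Thus the $0$-skeleta agree, and with the flag-complex reduction from the first paragraph the theorem follows.

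The step I expect to be the main obstacle is the assertion, in the second paragraph, that $\mathbf{u}(\Lambda)$ lies in $L$ exactly — equivalently, that $r_M$ sends the lattice operation $(\Lambda,\Lambda')\mapsto\pi^a\Lambda\cap\pi^b\Lambda'$ to the honest tropical combination $(-a)\odot(\,\cdot\,)\oplus(-b)\odot(\,\cdot\,)$ inside $L$, with no residual projection. This really does use that one is working with reduced exponent vectors of lattices in the membrane, and cannot be derived from any blanket claim that $pr_{L}$ is a morphism of tropical semimodules, as that is false in general. The remaining ingredients — tropical convexity of $L$, the description of lattice points of a tropical lattice polytope, and verifying that $r_M(\conv(\dots))$ is an induced subcomplex so that its $0$-cells determine it — are routine but deserve explicit justification.
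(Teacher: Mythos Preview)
The paper does not give its own proof of this theorem; it is quoted from \cite{JSY} as Proposition~22 and used as a black box, so there is no in-paper argument to compare your proposal against.

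For what it is worth, your outline is sound and tracks the argument in \cite{JSY}. The identity $\big(\bigcap_{j}\pi^{a_j}\Lambda_j\big)\cap Kv_i=\pi^{\max_j(a_j+c_i^{(j)})}Rv_i$ is exactly what makes $r_M$ intertwine lattice intersection with tropical $\oplus$, and is the heart of the matter. The concern you flag---whether $\mathbf u(\Lambda)=(-c_1,\dots,-c_n)$ already lies in $L$---is legitimate, but has a cleaner resolution than your monotonicity squeeze: the paper itself (in the proof of Corollary~\ref{origin}) appeals to \cite[Lemma~21]{JSY}, which gives $\Psi_M(r_M(\Lambda))=(N_\Lambda(v_1),\dots,N_\Lambda(v_n))$ via the additive-norm description of vertices, and $N_\Lambda(v_i)=-c_i$ follows at once from $\Lambda\cap Kv_i=\pi^{c_i}Rv_i$. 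Your squeeze argument as written is not quite a proof and would need more care. The flag-complex reduction in your first paragraph is fine once you note that both sides are tropically convex subsets of $L$ (the left side because $r_M$ takes intersections to intersections, which correspond under $\Psi_M$ to tropical sums), so their standard triangulations are full subcomplexes determined by their $0$-skeleta.
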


In particular, if $[M]$ contains the convex hull of $\Lambda_1,\ldots, \Lambda_s$, then the retraction map acts as the identity, and the convex hull $\conv(\Lambda_1,\ldots,\Lambda_s)$ is isomorphic to the standard triangulation of a tropical polytope. This suggests an approach for computing convex hulls in $\mathcal B_d$ as follows:
\vspace{.1in}

\begin{algorithm}[Convex hull computation]
\,
\label{convex-hull-computation}
\begin{algorithmic}[1]
\item[]
\REQUIRE $M_1, \ldots, M_s$ $d\times d$ invertible matrices over $K$ whose columns are bases for lattices $\Lambda_1,\ldots, \Lambda_s$
\ENSURE $\conv(\Lambda_1,\ldots, \Lambda_s)\subseteq \mathcal B_d$
\STATE $[M]\leftarrow$ a membrane containing $\conv(\Lambda_1,\ldots, \Lambda_s)$
\FORALL{$i\in \{1,\ldots, s\}$}
	\STATE $P_i \leftarrow \Psi_M(r_M(\Lambda_i))$
\ENDFOR
\STATE $X\leftarrow \tconv(P_1,\ldots, P_s)$
\RETURN{$X$}
\end{algorithmic}
\end{algorithm}

Note that given a lattice $\Lambda_i$ represented by a matrix $M_i$, we can compute the image $\Psi_M(r_M(\Lambda_i))$ in $\mathbb{TP}^{n-1}$ simply by taking the tropical row sum of the matrix $\val(M_i^{-1}M)$, by \cite[Lemma 21]{JSY}.

\begin{example}
\label{conv-triangle-ex1}
Retain the setup of Example \ref{early-convex-example}. We can apply Algorithm \ref{enveloping-membrane}, which we will soon discuss, to get that the membrane $[M]$ with 
\[M = \begin{pmatrix}1 & 0 & 0 & 0\\ 0 & 1 & 0 & 5\\ 0 &0 & 1 & 1\end{pmatrix}
\]
contains the convex hull of $M_1, M_2,$ and $M_3$. Running through Algorithm \ref{convex-hull-computation} with this membrane yields the following tropical matrix, 
\[
\begin{pmatrix}0 & 0 & 0 & 0\\ 0 & 1 & 2 & 3\\ -1 & -2 & 1 & -1\end{pmatrix},\]
whose rows or columns span the tropical convex hull in Figure \ref{convex-triangle-diagram}. Note that tropical polytopes are self-dual, i.e. the columns and rows of any tropical matrix span isomorphic tropical polytopes \cite[Theorem 1]{DS}.
\begin{figure}[h]
\centering
\includegraphics[height=3.2in]{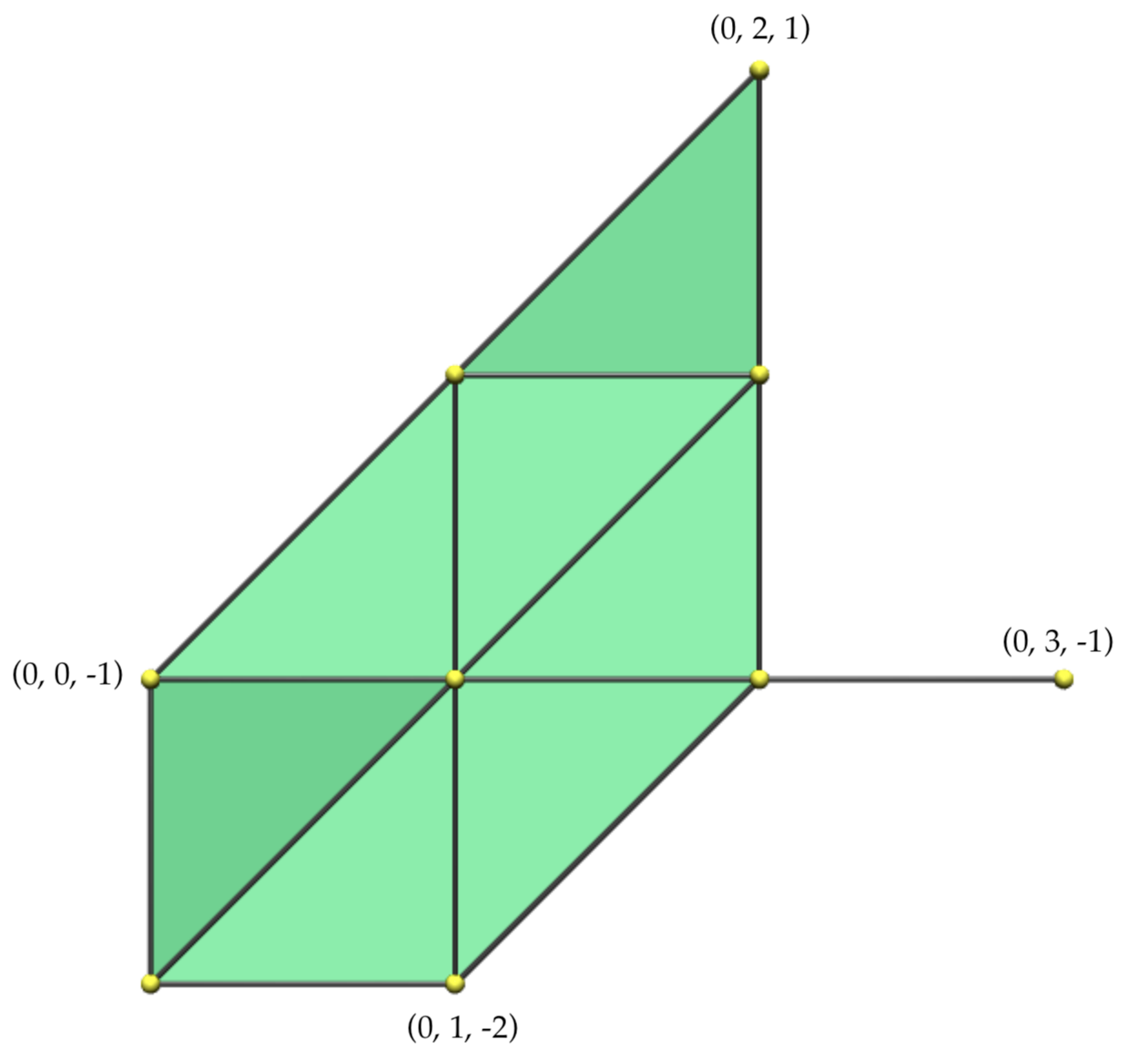}
\caption{The tropical polytope isomorphic to the convex hull of the three matrices in Example \ref{early-convex-example}, with coordinates of spanning vertices labeled.}
\label{tconv0pic}
\end{figure}

From this tropical polytope we can construct representatives for any of the lattice classes in Figure \ref{convex-triangle-diagram}. For example, consider the central lattice point $(0, 1, -1)$ with six neighbors. By Equation (14) in \cite{DS}, this lattice point in the column-span of our tropical matrix corresponds to $(-1, -1, 0, 0)$ in the row-span. In turn, this point corresponds to the class of the lattice
\[\mathbb Z_5\left\{5^{1}\cdot \begin{pmatrix}1 \\ 0 \\ 0\end{pmatrix}, 5^{1}\cdot \begin{pmatrix} 0 \\ 1 \\ 0\end{pmatrix}, 5^0 \cdot \begin{pmatrix} 0 \\ 0 \\ 1\end{pmatrix}, 5^0\cdot \begin{pmatrix}0 \\ 5 \\ 1\end{pmatrix}\right\}.\]
\end{example}

Of course, Algorithm \ref{convex-hull-computation} requires an \emph{enveloping membrane} of $\Lambda_1,\ldots, \Lambda_s$: a membrane containing the convex hull of $\Lambda_1,\ldots,\Lambda_s$. Without such a membrane the tropical polytope produced by Algorithm \ref{convex-hull-computation} need not be isomorphic to our original convex hull. Note that membranes need not be convex, so it is not sufficient to simply find a membrane containing the spanning lattice classes. This fact will be demonstrated in Example \ref{conv-quadrilateral-ex}.

No bounded-time algorithm for computing an arbitrary enveloping membrane has thus far been described. A procedure for computing such an enveloping membrane is described in \cite{JSY}, but it is often impractical: it relies on the computation of each individual element of the convex hull, while expanding a starting membrane to contain each element whenever necessary. In Section \ref{convex-hulls} we will describe an improved algorithm with bounded complexity in $d$ and $s$, allowing us to algorithmically realize any convex hull as a tropical polytope.

\begin{remark}
Our notion of convexity was originally introduced by Faltings \cite{F}, who noted that configurations $\Gamma$ of vertices in $\mathcal B_d$ correspond to certain schemes $M(\Gamma)$ called \emph{Mustafin varieties} or \emph{Deligne schemes} over the spectrum of a DVR. In the arithmetic setting, these Mustafin varieties function as local models of Shimura varieties \cite{PRS}. The special fiber of $M(\Gamma)$ generally has many singularities, but replacing $\Gamma$ with its convex hull $\Gamma'$ yields a regular Mustafin variety $M(\Gamma')$ with a dominant morphism $M(\Gamma')\to M(\Gamma)$, such that the irreducible components of the special fiber of $M(\Gamma')$ intersect transversally \cite[Lemma 2.4 and Theorem 2.10]{CHSW}. In this way, our fully-specified version of Algorithm \ref{convex-hull-computation}, derived in Section \ref{convex-hulls}, allows for the explicit resolution of singularities of Mustafin varieties.
\end{remark}

\section{Simultaneously-adaptable bases}

\label{SA-basis-section}

In this section we review a classical result on lattices over valued fields, following \cite[Section 6.9]{AB}, and describe its relevance to our setting of convex hulls in affine buildings. Note that in what follows we say \emph{monomial matrix} to refer to any matrix $A$ supported on a permutation matrix: i.e., there exists a permutation $\sigma$ such that $A_{ij}\neq 0$ if and only if $j=\sigma(i)$.

\vspace{.1in}
\begin{algorithm}[Simultaneously adaptable basis for two lattices]
\,
\label{CommonBasis}
\begin{algorithmic}[1]
\item[]
\REQUIRE $M_1, M_2$ $d\times d$ invertible matrices over $K$ whose columns are bases for lattices $\Lambda_1$ and $\Lambda_2$ in $\mathcal B_d$
\ENSURE Invertible matrix $A$, monomial matrix $\Delta$ such that the columns of $A$ and $A\Delta$ are bases for $\Lambda_1$ and $\Lambda_2$, respectively
\STATE $B_{1}\leftarrow M_1$
\STATE $C_{1}\leftarrow M_2$
\FORALL{$i \in \{1,\dots, n-1\}$}
\STATE $N_i \gets B_i^{-1}C_i$
\STATE $n_i\leftarrow $ entry of minimal valuation in $N_i$ not equal to $n_1, \ldots, n_{i-1}$
\STATE $L_i\gets$ the matrix such that $L_iN_i$ is obtained from $N_i$ by eliminating all other elements in the column of $n_i$
\STATE $R_i\gets$ the matrix such that $L_i N_i R_i$ is obtained from $L_i N_i$ by eliminating all other elements from the row of $n_i$
\STATE $B_{i+1}\gets B_i L_i^{-1}$
\STATE $C_{i+1}\gets C_iR_i$
\ENDFOR
\STATE $A\gets B_d$
\STATE $\Delta\gets B_d^{-1}C_d$
\RETURN{$A, \Delta$}
\end{algorithmic}
\end{algorithm}
\vspace{-0.075in}
\begin{lemma}
\label{CommonBasisProof}
Let $M_1$ and $M_2$ be $d\times d$ invertible matrices over $K$ for lattices $\Lambda_1$ and $\Lambda_2$ in $\mathcal B_d$. Then Algorithm \ref{CommonBasis} correctly returns a basis $A$ for $\Lambda_1$ and a monomial matrix $\Delta$ such that $A\Delta$ is a basis for $\Lambda_2$.
\end{lemma}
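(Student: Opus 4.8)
The plan is to verify, by induction on the loop index $i$, that two invariants hold at the start of each iteration: first, that the columns of $B_i$ form a basis for $\Lambda_1$ and the columns of $C_i$ form a basis for $\Lambda_2$; and second, that in the matrix $N_i = B_i^{-1}C_i$ the rows and columns indexed by the previously chosen pivots $n_1,\dots,n_{i-1}$ have already been cleared, so that $N_i$ restricted to the complementary rows and columns still records the change of basis between the ``unresolved'' parts of $\Lambda_1$ and $\Lambda_2$. The base case is immediate since $B_1 = M_1$ and $C_1 = M_2$.

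For the inductive step, I would observe that $L_i$ is a product of elementary row operations clearing the column of the pivot $n_i$, and $R_i$ a product of elementary column operations clearing the row of $n_i$. The key point here is that $n_i$ is chosen to have \emph{minimal valuation} among the remaining entries: this guarantees that the multipliers appearing in $L_i$ and $R_i$ (each of the form $-(\text{entry})/n_i$) lie in the valuation ring $R$, so that $L_i$ and $L_i^{-1}$, and likewise $R_i$ and $R_i^{-1}$, all have entries in $R$; hence $B_{i+1} = B_iL_i^{-1}$ spans the same $R$-lattice as $B_i$, namely $\Lambda_1$, and $C_{i+1} = C_iR_i$ spans $\Lambda_2$. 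Then $N_{i+1} = B_{i+1}^{-1}C_{i+1} = L_iN_iR_i$, which by construction has the pivot row and column of $n_i$ cleared in addition to the previously cleared ones; one should check that the row/column operations clearing $n_i$ do not disturb the already-cleared rows and columns, which follows because those rows and columns have a single nonzero entry off which nothing can be further eliminated, and because we only ever add multiples of the pivot row/column. After $d-1$ iterations, $d-1$ of the rows and columns of $N_d$ have been cleared; since $N_d$ is invertible, the last remaining row and column are forced to have their single nonzero entry in the last uncleared position, so $N_d = B_d^{-1}C_d = \Delta$ is monomial. Setting $A = B_d$ then gives the claim: $A$ is a basis for $\Lambda_1$, and $A\Delta = C_d$ is a basis for $\Lambda_2$.

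The main obstacle I anticipate is the bookkeeping in the inductive step: carefully arguing that performing the row and column clearing for the new pivot $n_i$ leaves the previously processed pivots untouched, and that the minimal-valuation choice of $n_i$ (avoiding the rows and columns of $n_1,\dots,n_{i-1}$) keeps all the elementary multipliers integral so that no lattice is changed. Everything else — that elementary operations with unit or integral multipliers preserve the spanned lattice, and that an invertible matrix with all but one row and column cleared is monomial — is routine linear algebra over $R$.
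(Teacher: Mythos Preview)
Your proposal is correct and follows essentially the same approach as the paper's proof: the minimal-valuation choice of pivot forces the elimination matrices $L_i$ and $R_i$ to lie in $SL_d(R)$, so the lattices are preserved at each step, and after $d-1$ iterations the base-change matrix is monomial. Your version is more explicit about the bookkeeping invariant (that previously cleared rows and columns remain cleared), which the paper leaves implicit, but the underlying argument is the same.
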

\begin{proof}
\vspace{-0.1in}
Because $n_i$ is chosen to be of minimal valuation in $N_i$, each $L_i$ and $R_i$ will be matrices in $SL_d(R)$. It follows that the new matrices $B_{i+1}$ and $C_{i+1}$ will be bases for $\Lambda_1$ and $\Lambda_2$ if $B_i$ and $C_i$ are, with base change matrix $L_iN_iR_i$. In particular, after $d-1$ steps of the for-loop, the matrix $\Delta = L_{d-1}N_{d-1}R_{d-1}$ will have $d-1$ distinct entries which are uniquely nonzero in their respective rows and columns. Hence $\Delta$ is a monomial matrix, as desired.
\end{proof}

\begin{definition}
We call the output $A$ in Algorithm \ref{CommonBasis} a \emph{simultaneously adaptable basis} (\emph{SA-basis}) for $\Lambda_1$ and $\Lambda_2$.
\end{definition}

Algorithm \ref{CommonBasis} is a demonstration of the building-theoretic fact that any two lattices lie in a common apartment. In general, there should be many distinct apartments containing any two given points. Indeed, the SA-basis $A$ obtained above depends not only on the lattices $\Lambda_1$ and $\Lambda_2$ but on our original choice of bases, which lattice we designate as $\Lambda_1$, and how we break "ties" between elements of minimal valuation when choosing pivots. In what follows, we break ties between potential pivots by picking the option in the leftmost column, then topmost row.

Because apartments are min-convex, we have the following fact:
\begin{corollary}
\label{apartment-containing-convex-hull}
Pick two lattice classes represented by $\Lambda_1$ and $\Lambda_2$ in $\mathcal B_d$, and let $A$ be a SA-basis for the two lattices. Then the apartment $[A]$ contains the convex hull $\conv(\Lambda_1,\Lambda_2)$.
\end{corollary}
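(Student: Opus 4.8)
The plan is to reduce the statement to two facts: that the apartment $[A]$ is a convex subset of $\mathcal B_d$, and that it contains both lattice classes $[\Lambda_1]$ and $[\Lambda_2]$. Granting these, the conclusion is immediate from the definition of $\conv(\Lambda_1,\Lambda_2)$ as the smallest convex set containing $[\Lambda_1]$ and $[\Lambda_2]$.

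First I would recall why apartments are convex. Let $a_1,\dots,a_d$ be the columns of $A$. A representative of an arbitrary class in $[A]$ has the form $R\{\pi^{u_1}a_1,\dots,\pi^{u_d}a_d\}$ with $u_i\in\mathbb Z$, and since the $a_i$ are linearly independent one computes directly that
\[R\{\pi^{u_1}a_1,\dots,\pi^{u_d}a_d\}\cap R\{\pi^{v_1}a_1,\dots,\pi^{v_d}a_d\} = R\{\pi^{\max(u_1,v_1)}a_1,\dots,\pi^{\max(u_d,v_d)}a_d\},\]
which again represents a class in $[A]$; the same computation (with a common rescaling by a power of $\pi$) applies to arbitrary representatives and to intersections of finitely many of them, so $[A]$ is closed under intersection of representatives. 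This is exactly the min-convexity of apartments invoked just before the statement.

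Next I would verify the two containments. By Lemma \ref{CommonBasisProof} the columns of $A$ form a basis for $\Lambda_1$, so $[\Lambda_1]$ is precisely the class of $R\{a_1,\dots,a_d\}\in[A]$ (take all $u_i=0$). For $\Lambda_2$, write the monomial matrix $\Delta$ as $PD$ with $P$ a permutation matrix and $D=\diag(\delta_1,\dots,\delta_d)$, $\delta_j\in K^*$. Then the columns of $A\Delta = APD$ are, up to reordering, the vectors $\delta_j\, a_{\tau(j)}$ for some permutation $\tau$ of $[d]$. Since $\delta_j$ and $\pi^{\val(\delta_j)}$ differ by a unit of $R$, we get $\Lambda_2 = R\{\delta_1 a_{\tau(1)},\dots,\delta_d a_{\tau(d)}\} = R\{\pi^{u_1}a_1,\dots,\pi^{u_d}a_d\}$ where $u_{\tau(j)}:=\val(\delta_j)$; hence $[\Lambda_2]\in[A]$ as well.

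With both classes lying in the convex set $[A]$, the smallest convex set containing them — namely $\conv(\Lambda_1,\Lambda_2)$ — is contained in $[A]$, which is the claim. I expect no real obstacle here: the only point needing care is the bookkeeping that converts the monomial matrix $\Delta$ into an explicit integer vector of exponents witnessing $[\Lambda_2]\in[A]$, together with the elementary observation that rescaling lattice generators by units of $R$ does not change the lattice.
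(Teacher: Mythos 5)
Your proof is correct and matches the paper's intended argument: the paper states the corollary as an immediate consequence of the min-convexity of apartments together with the fact that the SA-basis $A$ and the monomial matrix $\Delta$ place both $[\Lambda_1]$ and $[\Lambda_2]$ inside $[A]$. You have simply spelled out the details (the explicit intersection formula for lattices with a common diagonal basis, and the conversion of $\Delta$ into an exponent vector) that the paper leaves implicit.
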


We can therefore view Algorithm \ref{CommonBasis} as a procedure for computing an apartment containing the convex hull of two points.

\begin{lemma}
\label{persistent-scaling}
Let $M_1$ and $M_2$ be two invertible $d\times d$ matrices representing lattices $\Lambda_1$ and $\Lambda_2$, and let $\Gamma$ be any diagonal matrix with $M_2' = M_2\Gamma$. Let $N_i$ and $N_i'$ be the base-change matrices at the $i$th step of Algorithm \ref{CommonBasis} executed with the pairs $(M_1, M_2)$ and $(M_1, M_2')$ as input respectively, $n_i$ and $n_i'$ the chosen pivots of least valuation at step $i$, and so on. If $k$ is a positive integer such that the positions of the pivots $n_i$ and $n_i'$ agree for all $i$ up to $k-1$, then $L_{k-1} = L_{k-1}'$ and $N_k' = N_k\Gamma$.
\end{lemma}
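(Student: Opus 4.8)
The plan is to induct on the step index $i$, showing simultaneously that $B_i = B_i'$, that the pivot positions agree through step $i$ (which is the hypothesis up to $k-1$), and that $C_i' = C_i\Gamma$; the claim $N_k' = N_k\Gamma$ then follows immediately from the definitions $N_i = B_i^{-1}C_i$ and $N_i' = (B_i')^{-1}C_i'$. The base case $i=1$ holds because $B_1 = B_1' = M_1$ and $C_1' = M_2' = M_2\Gamma = C_1\Gamma$, hence $N_1' = M_1^{-1}M_2\Gamma = N_1\Gamma$. Since $\Gamma$ is diagonal, right-multiplication by $\Gamma$ rescales the columns of $N_1$ but does not permute its entries among positions; because valuations are additive, the entry in position $(a,b)$ of $N_1'$ has valuation $\val((N_1)_{ab}) + \val(\Gamma_{bb})$. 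This does not obviously preserve the \emph{position} of the minimal-valuation entry in general — but we are given that the pivot positions agree through step $k-1$, so we may simply use that hypothesis rather than re-deriving it.

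For the inductive step, suppose $B_i = B_i'$, the pivots through step $i$ agree, and $C_i' = C_i\Gamma$, so $N_i' = N_i\Gamma$. The pivot $n_i$ sits in some position, say $(a_i, b_i)$, and by hypothesis $n_i'$ sits in the same position. I would then argue that the elimination matrices are related by $L_i' = L_i$ and $R_i' = \Gamma^{-1}R_i\Gamma$. The key observation is that $L_i$ is determined by the column of $N_i$ containing the pivot: $L_i$ performs row operations scaled so as to clear the other entries in column $b_i$ using the pivot row. Since $N_i' = N_i\Gamma$ simply rescales column $b_i$ of $N_i$ by the scalar $\Gamma_{b_i b_i}$, the ratios $(N_i)_{j b_i}/(N_i)_{a_i b_i}$ used to define the row-clearing multipliers are unchanged, so $L_i' = L_i$. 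This gives $B_{i+1}' = B_i'(L_i')^{-1} = B_i L_i^{-1} = B_{i+1}$, which is exactly the assertion $L_{k-1} = L_{k-1}'$ once we reach $i = k-1$ (and establishes the $B$-part of the induction). Next, $L_i' N_i' = L_i N_i \Gamma$, so $R_i$ is the column-operation matrix clearing the row $a_i$ of $L_i N_i$, while $R_i'$ clears row $a_i$ of $L_i N_i \Gamma$; conjugating by $\Gamma$ compensates for the rescaling of columns, giving $R_i' = \Gamma^{-1} R_i \Gamma$ (this matrix is still upper/lower unitriangular of the right shape because $\Gamma$ is diagonal). Therefore $C_{i+1}' = C_i' R_i' = C_i \Gamma \cdot \Gamma^{-1} R_i \Gamma = C_i R_i \Gamma = C_{i+1}\Gamma$, completing the induction.

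The main obstacle I anticipate is bookkeeping around the exact combinatorial description of the matrices $L_i$ and $R_i$ — the algorithm statement specifies them only implicitly ("the matrix such that $L_i N_i$ is obtained by eliminating all other elements in the column of $n_i$"), so I must pin down precisely how a diagonal rescaling of one column interacts with these two elimination steps, and verify that the conjugated matrix $\Gamma^{-1}R_i\Gamma$ really is of the same form (it is, since $\Gamma$ is diagonal, so conjugation only rescales off-diagonal entries without moving their support). A secondary subtlety: the pivot at step $i$ is chosen to avoid the positions of $n_1, \dots, n_{i-1}$, so I should note that since those earlier positions coincide in the two runs by hypothesis, the exclusion sets are identical, which is consistent with the assumed agreement of $n_i$ and $n_i'$ and requires no extra argument. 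Everything else is a direct propagation of the relation "$B$ is unchanged, $C$ is rescaled on the right by $\Gamma$" through the loop.
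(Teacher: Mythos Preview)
Your proposal is correct and follows essentially the same inductive scheme as the paper: both proofs show $N_i' = N_i\Gamma$ by induction, deduce $L_{k-1}=L_{k-1}'$ from the fact that the row-clearing multipliers depend only on ratios within the pivot column (which a diagonal rescaling leaves unchanged), and then argue that the column-clearing step preserves the relation. The only difference is cosmetic: the paper tracks the single relation $N_i' = N_i\Gamma$ and handles the column step by noting that eliminating with a column that has a single nonzero entry cannot disturb the column-scaled structure, whereas you track $B_i=B_i'$ and $C_i'=C_i\Gamma$ separately and make the column step explicit via $R_i'=\Gamma^{-1}R_i\Gamma$. One minor wording issue: $R_i$ is not upper/lower unitriangular in general but rather of the form ``identity plus entries supported in row $b_i$''; conjugation by a diagonal matrix does preserve that shape, so your conclusion stands.
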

\begin{proof}
We prove the result by induction, noting that the base case $k=1$ follows trivially. Suppose that the first $k-1$ pivots are the same for the two algorithm executions. Because the first $k-2$ pivots are the same, by the inductive hypothesis we have that $N_{k-1}'=N_{k-1}\Gamma$. In particular, the ratio of any two entries in the same column is the same for $N_{k-1}$ and $N'_{k-1}$. Now since the $k-1$st pivot position is also the same, the row operations to obtain $N_{k}$ and $N_{k}'$ from $N_{k-1}$ and $N_{k-1}'=N_{k-1}\Gamma$ agree as well, so that $L_{k-1} = L_{k-1}'$. Next the column operations necessary to clear the rows of two pivots may differ, but in both executions we eliminate using a column which has no other nonzero entries. It follows that $N_k' = N_k\Gamma$, as desired.
\end{proof}

\begin{corollary}
\label{pivot-positions}
Keep the setup of Lemma \ref{persistent-scaling} above, and let $A$ be the basis of $\Lambda_1$ produced by Algorithm \ref{CommonBasis}. If {all} pivot positions of Algorithm \ref{CommonBasis} are the same for the two inputs $(M_1, M_2)$ and $(M_1, M_2')$, then the lattice class $[\Lambda_2']$ for $M_2'$ is contained in the apartment $[A]$.
\label{apartment-criterion}
\end{corollary}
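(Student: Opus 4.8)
The plan is to show that, under the pivot hypothesis, running Algorithm \ref{CommonBasis} on the input $(M_1, M_2')$ produces the \emph{same} SA-basis $A$ that was produced on $(M_1, M_2)$, together with a monomial matrix $\Delta'$ for which $A\Delta'$ is a basis of $\Lambda_2'$. Once this is established, membership of $[\Lambda_2']$ in the apartment $[A]$ follows directly from the definitions of ``monomial matrix'' and ``apartment.''

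First I would pin down that the SA-bases coincide. Write $A'$ for the basis of $\Lambda_1$ output by Algorithm \ref{CommonBasis} on $(M_1, M_2')$ (the matrix $A$ in the statement is the output on $(M_1, M_2)$; the point of this step is that the two agree). For every $i \in \{1, \dots, d-1\}$ the hypothesis guarantees that the pivot positions agree through step $i$, so Lemma \ref{persistent-scaling} applied with $k = i+1$ gives $L_i = L_i'$. Since $B_1 = B_1' = M_1$ and $B_{i+1} = B_i L_i^{-1}$, a one-line induction yields $B_i = B_i'$ for all $i$, hence $A = B_d = B_d' = A'$.

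Next, Lemma \ref{CommonBasisProof} applied to the input $(M_1, M_2')$ tells us the algorithm returns the basis $A$ of $\Lambda_1$ together with a monomial matrix $\Delta' = (B_d')^{-1} C_d' = A^{-1} C_d'$ such that the columns of $A\Delta'$ form a basis of $\Lambda_2'$. Because $\Delta'$ is monomial, each column of $A\Delta'$ is a nonzero scalar multiple of a distinct column $a_j$ of $A$; writing each such scalar as $\pi^{u_j}$ times a unit of $R$ and discarding the unit (which does not change the $R$-span), we obtain $\Lambda_2' = R\{\pi^{u_1} a_1, \dots, \pi^{u_d} a_d\}$ for integers $u_j$. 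This is precisely a lattice class lying in the apartment $[A]$, so $[\Lambda_2'] \in [A]$, as claimed.

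The only genuinely substantive step is the first one: the observation that agreement of \emph{all} pivot positions propagates up the sequence $L_1, \dots, L_{d-1}$ and thereby forces $A = A'$. Everything afterward is bookkeeping with the definition of an apartment, together with the routine fact that units of $R$ may be absorbed when passing to lattice classes, so I do not expect any real obstacle beyond stating the induction cleanly.
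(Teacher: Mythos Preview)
Your argument is correct and follows essentially the same line as the paper's own proof: use Lemma~\ref{persistent-scaling} to conclude $L_i = L_i'$ for all $i$, deduce that both runs output the same SA-basis $A$, and then observe that $A\Delta'$ being a basis of $\Lambda_2'$ with $\Delta'$ monomial places $[\Lambda_2']$ in $[A]$. The paper compresses the last step into a single sentence, whereas you spell out the unit-absorption explicitly, but the substance is identical.
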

\begin{proof}
Because all pivots are the same, Lemma \ref{persistent-scaling} implies that $L_i = L_i'$ for all $i$. This means the final basis for $\Lambda_1$ produced by both executions of the algorithm is $A=L_d L_{d-1}\dots L_1 M_1$. In particular, we have that $[A]$ contains both $[\Lambda_2]$ and $[\Lambda_2']$.
\end{proof}

\begin{lemma}
\label{covering-lemma}
Let $A$ be an invertible matrix defining an apartment $[A]$ and $M$ a basis for a lattice $\Lambda$ whose class is not in $[A]$. Then $\conv([\Lambda], [A])$ can be covered with $d!$ different apartments.
\end{lemma}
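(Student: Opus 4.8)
The plan is to realize the covering as the family of apartments produced by Algorithm \ref{CommonBasis} applied to $\Lambda$ together with each vertex of $[A]$, and to control how many distinct apartments can occur using the rescaling behavior recorded in Lemma \ref{persistent-scaling}. I would begin by parametrizing: the vertices of $[A]$ are exactly the classes $[A\Gamma]$ as $\Gamma$ ranges over diagonal matrices with integral-power-of-$\pi$ entries. For each such $\Gamma$, run Algorithm \ref{CommonBasis} on the ordered pair $(M, A\Gamma)$, keeping $M$ always as the \emph{first} input, to obtain a SA-basis $A_\Gamma$; by Corollary \ref{apartment-containing-convex-hull} the apartment $[A_\Gamma]$ contains $\conv([\Lambda], [A\Gamma])$, hence contains both $[\Lambda]$ and $[A\Gamma]$.

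Next I would reduce the full convex hull to these two-point hulls. Since apartments are convex we have $\conv([A]) = [A]$, so the (manifestly infinite) analogue of Lemma \ref{min-conv-decomposition} gives $\conv([\Lambda],[A]) = \bigcup_{[\Lambda']\in[A]}\conv([\Lambda],[\Lambda'])$. Concretely, any class in $\conv([\Lambda],[A])$ is a finite intersection of scalings of $\Lambda$ and of lattices from $[A]$; grouping the factors coming from $[A]$ and invoking convexity of $[A]$, such a class has a representative of the form $\pi^a\Lambda\cap\Lambda'$ with $[\Lambda']=[A\Gamma]\in[A]$ (or lies in $[A]$ outright), so it lies in $\conv([\Lambda],[A\Gamma])\subseteq[A_\Gamma]$. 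Thus $\conv([\Lambda],[A])\subseteq\bigcup_\Gamma[A_\Gamma]$, and everything comes down to bounding the number of distinct apartments $[A_\Gamma]$ by $d!$.

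The main obstacle is the claim that $[A_\Gamma]$ depends only on the sequence of \emph{columns} $c_1,\dots,c_{d-1}$ in which Algorithm \ref{CommonBasis} selects its pivots on input $(M, A\Gamma)$. From the algorithm, $A_\Gamma = B_d = M L_1^{-1}\cdots L_{d-1}^{-1}$, so it suffices to show that each $L_i$ agrees for two inputs $(M,A\Gamma)$ and $(M,A\Gamma')$ having the same pivot-column sequence. Here I would lean on Lemma \ref{persistent-scaling}: writing $A\Gamma' = (A\Gamma)(\Gamma^{-1}\Gamma')$ exhibits the two executions as having each $N_i$ differing by right multiplication by a fixed diagonal matrix, and right multiplication by a diagonal matrix does not change which row holds the minimum-valuation entry within any given column. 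Hence equal pivot \emph{columns} inductively force equal pivot \emph{positions}, which is exactly the hypothesis under which Lemma \ref{persistent-scaling} yields $L_i = L_i'$ and propagates the column-rescaling relation to $N_{i+1}$. The care needed is precisely in this interlocking induction — pivot column $\Rightarrow$ pivot position $\Rightarrow$ $L_i$ $\Rightarrow$ the next $N$ is again a column-rescaling of the other execution's — but the input for each implication is supplied by Lemma \ref{persistent-scaling} together with the column-rescaling remark above.

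Finally I would count. At step $i$ the pivot must lie in a column not used at steps $1,\dots,i-1$, leaving $d-(i-1)$ choices, so there are at most $\prod_{i=1}^{d-1}(d-i+1) = d!$ pivot-column sequences, and hence at most $d!$ distinct apartments $[A_\Gamma]$. Combined with the inclusion $\conv([\Lambda],[A])\subseteq\bigcup_\Gamma[A_\Gamma]$ from the second paragraph, this proves the lemma; if exactly $d!$ apartments are desired one may repeat one of them or append further apartments through $[\Lambda]$.
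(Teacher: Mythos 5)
Your proposal is correct and follows essentially the same route as the paper's proof: decompose $\conv([\Lambda],[A])$ into the two-point hulls $\conv([\Lambda],[A\Gamma])$, apply Algorithm \ref{CommonBasis} with $M$ as first input, and use Lemma \ref{persistent-scaling} in an interlocking induction to show the resulting apartment depends only on the pivot-column sequence, of which there are at most $d!$. The extra detail you supply on the infinite-union decomposition and on $A_\Gamma = ML_1^{-1}\cdots L_{d-1}^{-1}$ is consistent with (indeed slightly more careful than) the paper's treatment.
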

\begin{proof}
Any element of $\conv([\Lambda],[A])$ will be contained in some $\conv([\Lambda], [\Lambda'])$ where $\Lambda'$ is a lattice having basis $A\Gamma$ for some diagonal matrix $\Gamma$. Fix such a $\Gamma$. We can use Algorithm \ref{CommonBasis} to compute an apartment $[B]$ containing $\conv([\Lambda], [A\Gamma])$. By Corollary \ref{pivot-positions}, this apartment also contains $\conv([\Lambda], [A\Gamma'])$ for any other diagonal matrix $\Gamma'$ leading to the same sequence of pivot positions. The key point is that we need only consider the sequence of columns that the pivots appear in. Since each pivot must appear in a different column, this means there are $d!$ different sequences of pivot positions.

To see why only the sequence of columns of the pivots matters, take some other $\Gamma'$ diagonal, and suppose that the sequence of pivot columns is the same for the two executions $(M, A\Gamma)$ and $(M, A\Gamma')$. We prove by induction on the $j$th pivot that all pivots are actually in the same positions. The first $j-1$ pivots appear in the same positions by assumption, so by Lemma \ref{persistent-scaling} the $j$th base-change matrix $N_{j}'$ for the input $(M, A\Gamma')$ equals $N_{j}\Gamma^{-1}\Gamma'$, where $N_{j}$ is the $j$th base-change matrix for the input $(M, A\Gamma)$. Then since the $j$th pivots appear in the same column, and scaling columns does not change the column entry of minimal valuation, the $j$th pivot will be in the same position for both executions as well.
\end{proof}

\begin{remark}
We note the similarity of Lemma \ref{covering-lemma} with \cite[Lemma 6.3]{S}, which states that any apartment $A$ in any building can be covered by the union of Weyl chambers based at some other fixed point $z$ with equivalence class in $\partial A$, the spherical apartment at infinity corresponding to $A$. We expect that Lemma \ref{covering-lemma} is an explicit analogue of this result in our specialized setting, where $\partial A$ is isomorphic to the symmetric group $S_d$ on $d$ elements, in which each Weyl chamber is replaced by a suitable apartment containing it to ensure the the convex hull of $z$ and $A$ is also covered.
\end{remark}

\section{Constructing enveloping membranes}

\label{convex-hulls}

In this section we combine the results of the previous section to solve the problem left open in Algorithm \ref{convex-hull-computation}. Namely, we present an algorithm to compute an enveloping membrane of a finite set of lattices. This allows us to realize convex hulls in the building as tropical polytopes.

\vspace{.1in}
\begin{algorithm}[List of apartments covering a convex hull]
\label{ApartmentList}
\begin{algorithmic}[1]
\item[]
\REQUIRE{$B_1,\dots, B_s$ base matrices for lattices $\Lambda_1,\dots, \Lambda_s$}
\ENSURE{A set of apartments covering $\conv(\Lambda_1,\dots, \Lambda_s)$}
\IF{$s = 2$}
 \RETURN{SA-basis of $\Lambda_1$ and $\Lambda_2$ via Algorithm \ref{CommonBasis}}
\ENDIF
\STATE $L_{s-1} \gets \text{set of apartments covering } \conv(\Lambda_2,\dots,\Lambda_s)$ via Algorithm \ref{ApartmentList}
\STATE $L_s\gets \emptyset$
\FORALL{$[A]\in L_{s-1}$}
  \STATE $L_A\gets \text{ set of apartments covering }\conv(\Lambda_1, [A])$ as in Remark \ref{convex-hull-with-apt-remark}
  \STATE $L_s\gets L_s \cup L_A$
\ENDFOR
\RETURN{$L_s$}
\end{algorithmic}
\end{algorithm}


\begin{theorem}
\label{apartment-list-correctness}
Let $M_1,\dots, M_s$ represent $s$ lattices $\Lambda_1,\dots, \Lambda_s$ in $\mathcal B_d$. Then Algorithm \ref{ApartmentList} correctly computes a list of apartments $L_{s}$ such that each lattice class $[\Lambda]\in\conv(\Lambda_1,\ldots,\Lambda_s)$ is contained in $[A]$ for some $[A]\in L_s$. Furthermore, $L_s$ has size at most $(d!)^{s-2}$.
\end{theorem}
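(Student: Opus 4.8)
The plan is to induct on $s$, following the recursive structure of Algorithm \ref{ApartmentList}. For the base case $s=2$, the algorithm returns the single apartment $[A]$ spanned by an SA-basis of $\Lambda_1$ and $\Lambda_2$ computed via Algorithm \ref{CommonBasis}; Corollary \ref{apartment-containing-convex-hull} guarantees $[A]\supseteq\conv(\Lambda_1,\Lambda_2)$, and since $|L_2|=1=(d!)^{0}=(d!)^{s-2}$, both assertions hold. So the work is entirely in the inductive step, and I would assume the theorem for $s-1$ lattices.

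For the inductive step with $s\ge 3$, the recursive call produces a list $L_{s-1}$ covering $\conv(\Lambda_2,\dots,\Lambda_s)$ with $|L_{s-1}|\le (d!)^{s-3}$. I would then invoke Lemma \ref{min-conv-decomposition} to write
\[\conv(\Lambda_1,\dots,\Lambda_s)=\bigcup_{[\Lambda']\in\conv(\Lambda_2,\dots,\Lambda_s)}\conv(\Lambda_1,\Lambda').\]
Given a class $[\Lambda']$ on the right, the inductive hypothesis supplies an apartment $[A]\in L_{s-1}$ with $[\Lambda']\in[A]$; since the convex hull operation is monotone under inclusion of point sets, $\conv(\Lambda_1,\Lambda')\subseteq\conv([\Lambda_1],[A])$. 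Lemma \ref{covering-lemma} (or, in the trivial case $[\Lambda_1]\in[A]$, the fact that apartments are convex) shows that the set $L_A$ built at line 7 covers $\conv([\Lambda_1],[A])$ with at most $d!$ apartments. Hence every class of $\conv(\Lambda_1,\dots,\Lambda_s)$ lies in some apartment of $L_s=\bigcup_{[A]\in L_{s-1}}L_A$, which gives correctness, while $|L_s|\le\sum_{[A]\in L_{s-1}}|L_A|\le (d!)^{s-3}\cdot d!=(d!)^{s-2}$ gives the size bound.

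The delicate point, and the one I would be most careful about, is the passage $\conv(\Lambda_1,\Lambda')\subseteq\conv([\Lambda_1],[A])$ followed by the application of Lemma \ref{covering-lemma}: the $d!$-apartment cover must depend only on $[A]$, not on which interior class $[\Lambda']\in[A]$ one started from. This is precisely what Corollary \ref{pivot-positions} and the ``only the sequence of pivot columns matters'' observation inside the proof of Lemma \ref{covering-lemma} provide, so it is essential that line 7 constructs its cover from $[A]$ as a whole rather than from a single class it contains. Everything else is bookkeeping: checking the arithmetic $(d!)^{s-3}\cdot d!=(d!)^{s-2}$, verifying the edge case $s=3$ where $|L_2|=1$ so $|L_3|\le d!=(d!)^{s-2}$, and noting that termination and finiteness of each $L_A$ are automatic since Algorithm \ref{CommonBasis} terminates and there are only finitely many pivot-column sequences.
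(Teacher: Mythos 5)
Your proof is correct and follows essentially the same route as the paper: induction on $s$, the decomposition of Lemma \ref{min-conv-decomposition}, and the $d!$ bound per apartment from Lemma \ref{covering-lemma}. You are slightly more careful than the paper about the base case and the degenerate situation $[\Lambda_1]\in[A]$, but these are elaborations of the same argument rather than a different one.
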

Of course this theorem and Lemma \ref{membranes-apartments} together imply that Algorithm \ref{ApartmentList} can be used to compute an enveloping membrane for $\Lambda_1,\dots,\Lambda_s$. We simply concatenate all the matrices in the output $L_s$.
\begin{proof}
If the algorithm is correct, then $L_{s-1}$ contains at most $(d!)^{s-3}$ apartments by induction. Since $L_A$ has size at most $d!$ by Lemma \ref{covering-lemma}, $L_s$ has size at most $(d!)^{s-2}$. 

It remains to prove correctness. By Lemma \ref{min-conv-decomposition}, any lattice class $[\Lambda]$ in $\conv(\Lambda_1,\dots, \Lambda_s)$ is contained in $\conv(\Lambda_1, \Lambda')$ for some $[\Lambda']\in \conv(\Lambda_2,\dots, \Lambda_s)$. There exists some $[A]\in L_{s-1}$ such that $[\Lambda']\in [A]$, and so $[\Lambda]\in\conv(\Lambda_1, [A])$. In particular, there is some $[B]\in L_A$ such that $[\Lambda]\in [B]$.
\end{proof}

\begin{remark}
\label{convex-hull-with-apt-remark}
The crucial part of Algorithm \ref{ApartmentList} is computing the set $L_A$ of apartments covering $\conv(\Lambda_1,[A])$. Recall from Lemma \ref{covering-lemma} that this set is indexed by permutations in $S_d$. We sketch here how to compute the apartment corresponding to the identity permutation; all other apartments can be computed very similarly.

Let $\Gamma = \diag(\pi^{a_1},\dots,\pi^{a_d})$ be a diagonal matrix with indeterminates $a_1,\dots,\ a_d\in \mathbb Z$. First choose $\Gamma$ to be any diagonal matrix such that the first pivot for the first base change matrix $M_1^{-1}A\Gamma$ is in the first column. Next we compute the second base-change matrix; by decreasing both $a_1$ and $a_2$ by a large enough common value, Lemma \ref{persistent-scaling} guarantees that the first pivot will still be in the first column, and that the second pivot will appear in the second column. We next compute the third base-change matrix by reducing $a_1, a_2,$ and $a_3$ all by some large enough value, and so on.
\end{remark}

\begin{remark}
\label{better-bound}
We present in the next section a more efficient algorithm for the $s = 3$ case, Algorithm \ref{enveloping-membrane}, needing only $2^d$ apartments to cover the convex hull instead of $d!$. Because Algorithm \ref{ApartmentList} is inductive on $s$, Algorithm \ref{enveloping-membrane} can be used for the $s=3$ case, providing a slightly better overall bound of $2^d\cdot (d!)^{s-3}$ apartments needed to cover the convex hull of $s$ lattices.
\end{remark}

\begin{corollary}
\label{dimension-bound}
Let $\Lambda_1,\dots, \Lambda_s$ be lattices in $K^d$. Then their convex hull $\conv(\Lambda_1,\dots,\Lambda_s)$ is isomorphic to a tropical polytope in $\mathbb{TP}^{N}$ where $N \leq d\cdot 2^d \cdot (d!)^{s-3}-1$.
\end{corollary}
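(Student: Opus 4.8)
The plan is to combine Theorem \ref{apartment-list-correctness} (with the improvement from Remark \ref{better-bound}) with Lemma \ref{membranes-apartments} and the tropical realization provided by Theorem \ref{minconv-isomorphism-theorem}. First I would invoke Algorithm \ref{ApartmentList}, improved by using Algorithm \ref{enveloping-membrane} for the $s=3$ base case as in Remark \ref{better-bound}, to produce a list $L_s$ of at most $2^d\cdot (d!)^{s-3}$ apartments covering $\conv(\Lambda_1,\dots,\Lambda_s)$. Each apartment is given by an invertible $d\times d$ matrix, so concatenating all of them produces a single $d\times n$ matrix $M$ with $n \le d\cdot 2^d\cdot (d!)^{s-3}$, and by Lemma \ref{membranes-apartments} the membrane $[M]$ is the union of the apartments spanned by its $d\times d$ submatrices, hence contains every apartment in $L_s$ and therefore contains $\conv(\Lambda_1,\dots,\Lambda_s)$.

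Next I would appeal to Theorem \ref{minconv-isomorphism-theorem}: since $[M]$ is an enveloping membrane, the retraction $r_M$ restricts to the identity on $\conv(\Lambda_1,\dots,\Lambda_s)$, so $\conv(\Lambda_1,\dots,\Lambda_s) = r_M(\conv(\Lambda_1,\dots,\Lambda_s))$ is isomorphic as a simplicial complex to $\tconv(\Psi_M(r_M(\Lambda_1)),\dots,\Psi_M(r_M(\Lambda_s)))$, a tropical polytope living in the tropical linear space $L\subseteq \TP^{n-1}$ associated to $M$. Reading off the ambient dimension, this realizes the convex hull as a tropical polytope in $\TP^{N}$ with $N = n-1 \le d\cdot 2^d\cdot (d!)^{s-3}-1$, which is the claimed bound.

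The only genuinely delicate point is bookkeeping the size of $n$: one must be careful that the concatenation of the apartment matrices still has rank $d$ (immediate, since each block already has rank $d$) and that the count $|L_s|\le 2^d (d!)^{s-3}$ from Remark \ref{better-bound} is applied with the correct exponent, giving $n\le d\cdot 2^d (d!)^{s-3}$ columns. Everything else is a direct citation of the structural theorems already in place, so I expect no real obstacle; the statement is essentially a corollary that packages Theorem \ref{apartment-list-correctness}, Lemma \ref{membranes-apartments}, and Theorem \ref{minconv-isomorphism-theorem} together. (One could remark that the bound is almost certainly far from tight — in practice many of the apartments in $L_s$ coincide or are redundant — but proving a better bound is not attempted here.)
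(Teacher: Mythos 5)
Your argument is the same as the paper's: apply Remark \ref{better-bound} to obtain an enveloping membrane $[M]$ with at most $d\cdot 2^d\cdot (d!)^{s-3}$ columns, then use Algorithm \ref{convex-hull-computation} (via Theorem \ref{minconv-isomorphism-theorem}) to realize the convex hull as a tropical polytope in $\TP^{n-1}$. The proposal is correct; it merely spells out the concatenation and rank bookkeeping that the paper leaves implicit.
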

\begin{proof}
Remark \ref{better-bound} implies that a matrix $M$ with at most $d\cdot 2^d\cdot (d!)^{s-3}$ columns generates a membrane $[M]$ containing $\conv(\Lambda_1,\dots, \Lambda_s)$. Algorithm \ref{convex-hull-computation} then realizes the convex hull as a tropical polytope in a tropical projective space of dimension at most $d\cdot 2^d\cdot (d!)^{s-3}-1$.
\end{proof}

\begin{corollary}
\label{dual-polytope}
Let $\Lambda_1,\dots, \Lambda_s$ be lattices in $K^d$. Then their convex hull $\conv(\Lambda_1,\dots,\Lambda_s)$ is isomorphic to a tropical polytope spanned by at most $d\cdot 2^d\cdot (d!)^{s-3}$ points in $\mathbb{TP}^{s-1}$.
\end{corollary}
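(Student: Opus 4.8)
The plan is to combine Corollary~\ref{dimension-bound} with the self-duality of tropical polytopes. First I would invoke Remark~\ref{better-bound} (equivalently, Algorithm~\ref{ApartmentList} refined by Algorithm~\ref{enveloping-membrane} in the $s=3$ step) to produce an enveloping membrane $[M]$ for $\Lambda_1,\dots,\Lambda_s$, where $M$ is a $d\times n$ matrix of rank $d$ with $n\leq d\cdot 2^d\cdot(d!)^{s-3}$ columns. Since $[M]$ contains $\conv(\Lambda_1,\dots,\Lambda_s)$, the retraction $r_M$ acts as the identity on the convex hull, so Theorem~\ref{minconv-isomorphism-theorem} yields an isomorphism of simplicial complexes between $\conv(\Lambda_1,\dots,\Lambda_s)$ and $\tconv(P_1,\dots,P_s)\subseteq L$, where $P_i=\Psi_M(r_M(\Lambda_i))\in\mathbb{TP}^{n-1}$ is obtained as the tropical row sum of $\val(M_i^{-1}M)$.

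Next I would assemble the $s\times n$ tropical matrix $P$ whose $i$th row is the coordinate vector $P_i$. By the self-duality of tropical polytopes (\cite[Theorem 1]{DS}), the tropical polytope spanned by the rows of $P$ is isomorphic, together with its standard triangulation, to the tropical polytope spanned by the columns of $P$; the latter is spanned by exactly $n$ points, each lying in $\mathbb{TP}^{s-1}$. Composing this isomorphism with the one from Theorem~\ref{minconv-isomorphism-theorem} exhibits $\conv(\Lambda_1,\dots,\Lambda_s)$ as a tropical polytope spanned by at most $d\cdot 2^d\cdot(d!)^{s-3}$ points in $\mathbb{TP}^{s-1}$, which is the assertion.

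I do not expect a genuine obstacle here; the content is essentially bookkeeping on top of the already-established Corollary~\ref{dimension-bound}. The one point requiring care is to apply \cite[Theorem 1]{DS} in the form that identifies the row- and column-spans of $P$ as \emph{simplicial} complexes (so that the composite map remains an isomorphism of triangulated complexes, matching the output of Algorithm~\ref{convex-hull-computation}), and to verify the boundary case $s=3$, where the stated bound is $d\cdot 2^d$ and relies on Algorithm~\ref{enveloping-membrane} rather than the inductive step of Algorithm~\ref{ApartmentList}.
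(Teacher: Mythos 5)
Your argument is the same as the paper's, which simply cites Corollary \ref{dimension-bound} together with the self-duality of tropical polytopes; you have just unpacked the bookkeeping (the $s\times n$ matrix $P$ and the row-span/column-span exchange from \cite[Theorem 1]{DS}) that the paper leaves implicit. The proposal is correct and requires no changes.
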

\begin{proof}
This follows directly from Corollary \ref{dimension-bound} and the self-duality of tropical polytopes.
\end{proof}

\begin{corollary}
\label{origin}
Let $\Lambda_1, \ldots, \Lambda_s$ be lattices in $K^d$. Let $[M]$ be the enveloping membrane for $\conv(\Lambda_1,\ldots,\Lambda_s)$ computed by concatenating the apartments from Algorithm \ref{ApartmentList}. Then $\Lambda_1$ is mapped to the origin by $\Psi_M$ in Algorithm \ref{convex-hull-computation}.
\end{corollary}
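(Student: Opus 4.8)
The plan is to trace through Algorithm \ref{ApartmentList} and observe that every apartment it outputs is represented by a basis matrix of $\Lambda_1$, so that the concatenated matrix $M$ has the form $(A^{(1)}\mid\cdots\mid A^{(r)})$ with each $A^{(t)}$ a basis of $\Lambda_1$; the conclusion then follows from the formula for $\Psi_M\circ r_M$ in terms of tropical row sums.

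First I would record the elementary fact, visible from the \textsc{Output} specification of Algorithm \ref{CommonBasis} and Lemma \ref{CommonBasisProof}, that the matrix $A$ returned by that algorithm is always a basis for its \emph{first} input lattice. I would then argue by induction on $s$ that every apartment placed in $L_s$ by Algorithm \ref{ApartmentList} is represented by a basis of $\Lambda_1$: in the base case $s=2$ the single returned apartment is the SA-basis of the ordered pair $(\Lambda_1,\Lambda_2)$, hence a basis of $\Lambda_1$; in the inductive step each apartment of $L_s$ lies in some $L_A$, and by Remark \ref{convex-hull-with-apt-remark} the apartments of $L_A$ are produced by running Algorithm \ref{CommonBasis} on inputs of the form $(M_1, A\Gamma)$, hence are again bases of $\Lambda_1$. (The recursion only changes which lattices play the roles of the later arguments; $\Lambda_1$ remains the first argument at the top level.) Concatenating the matrices in $L_s$ therefore gives $M = (A^{(1)}\mid\cdots\mid A^{(r)})$ with every $d\times d$ block $A^{(t)}$ a basis of $\Lambda_1$.

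Finally I would invoke the description following Algorithm \ref{convex-hull-computation}: by \cite[Lemma 21]{JSY}, $P_1 = \Psi_M(r_M(\Lambda_1))$ is the tropical row sum of $\val(M_1^{-1}M)$, whose $k$-th coordinate is $\min_j \val\big((M_1^{-1}M)_{jk}\big)$. Since $M_1$ and each $A^{(t)}$ are bases of the same lattice, the block $M_1^{-1}A^{(t)}$ lies in $GL_d(R)$; its reduction modulo $\pi$ is invertible over $k$, so each of its columns contains a unit entry and hence has minimal valuation $0$. As every column of $M_1^{-1}M$ is a column of one of these blocks, all coordinates of $P_1$ vanish, i.e.\ $P_1$ is the origin of $\mathbb{TP}^{n-1}$. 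The only step needing genuine care is the bookkeeping in the inductive step above — confirming that $\Lambda_1$ really is the first argument of every call to Algorithm \ref{CommonBasis} feeding into $L_s$, so that each block of $M$ is a basis of $\Lambda_1$ and not of some other $\Lambda_i$; the rest is routine.
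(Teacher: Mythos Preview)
Your proof is correct and follows essentially the same approach as the paper: both arguments hinge on the fact that every column of $M$ belongs to some basis of $\Lambda_1$, then invoke \cite[Lemma 21]{JSY} to conclude that the image of $\Lambda_1$ is the origin. You are more explicit than the paper in verifying this key fact by tracing through Algorithm \ref{ApartmentList} (the paper simply asserts ``each $v_i$ is an element for a basis for $\Lambda_1$''), and you compute the final step via the tropical row sum of $\val(M_1^{-1}M)$ and the observation $M_1^{-1}A^{(t)}\in GL_d(R)$, whereas the paper phrases it in terms of the additive norm $N_{\Lambda_1}$; these are equivalent formulations of the same computation.

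One minor stylistic point: your ``induction on $s$'' is really just a case split, since the inductive hypothesis on $L_{s-1}$ (which would concern bases of $\Lambda_2$, not $\Lambda_1$) is never used --- the claim for $L_s$ follows directly from the fact that every apartment in $L_s$ is produced at the top level by a call to Algorithm \ref{CommonBasis} with first argument $M_1$. You seem aware of this in your parenthetical remark, but the framing could be tightened.
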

\begin{proof}
There is another representation of the building $\mathcal B_d$, which describes the vertices as additive norms $N:K^d\to\mathbb R\cup \{\infty\}$. We can easily pass between these two descriptions of the building in terms of lattice classes and additive norms. If $\Lambda$ is a lattice represented by a matrix $M$, then the corresponding additive norm is defined by
\[N_\Lambda(v)=\max\{u\in\mathbb Z: z^{-u}v\in\Lambda\}.\]
Write $M=(v_1,\ldots, v_n)$. By \cite[Lemma 21]{JSY}, the image of $\Lambda_1$ under the map of  Theorem \ref{minconv-isomorphism-theorem} is $(N_{\Lambda_1}(v_1),\ldots, N_{\Lambda_1}(v_n))$, where $N_{\Lambda_1}$ is the additive norm corresponding to $\Lambda_1$. But clearly $N_{\Lambda_1}(v_i)=0$ for each $i$, since each $v_i$ is an element for a basis for $\Lambda_1$.
\end{proof}

Viewed in the dual setting of Corollary \ref{dual-polytope}, Corollary \ref{origin} implies that our algorithm places us in the affine chart of $\mathbb{TP}^{s-1}$ where the first coordinate is zero.

\begin{example}
\label{conv-quadrilateral-ex}
Consider the following four $3\times 3$ matrices over $\mathbb C((t))$:
\[M_1 = \begin{pmatrix}1 & 1 & 1 \\ 1 & t & t^2\\ 1 & t^{-2} & t\end{pmatrix}, M_2 = \begin{pmatrix}1 & 1 & 1\\ t & t^2 & t^3\\ t^{-2} & t & t^5\end{pmatrix}, M_3 = \begin{pmatrix}1 & 1 & 1 \\ t^2 & t^3 & t^4\\ t & t^5 & t^8\end{pmatrix}, M_4 = \begin{pmatrix}1 & 1 & 1 \\ t^3 & t^4 & t^5 \\ t^5 & t^8 & t^{12}\end{pmatrix}.\]
These are the contiguous maximal submatrices of
\[M = \begin{pmatrix}1 & 1 & 1 & 1 & 1 & 1\\
1 & t & t^2 & t^3 & t^4 & t^5\\
1 & t^{-2} & t & t^5 & t^8 & t^{12}\end{pmatrix},\]
so the corresponding lattice classes certainly all lie in the membrane $[M]$.
An optimist could suppose that $[M]$ were in fact an enveloping membrane for the convex hull of our four matrices. Running through Algorithm \ref{convex-hull-computation} with the membrane $[M]$ yields the following tropical matrix:
\[\begin{pmatrix}
0 & 0 & 0 & 0 & 0 & 0\\
-2 & 0 & 0 & 0 & 0 & 0\\
-3 & -4 & 0 & 0 & 0 & 0\\
-6 & -8 & -5 & 0 & 0 & 0\end{pmatrix}.\]
The columns of this matrix span the tropical polytope $ P$, visualized using Polymake in Figure \ref{tconv1pic}. Its standard triangulation contains 18 vertices, 32 edges, and 15 triangles.

\begin{figure}[h!]
\centering
\begin{minipage}{0.475\textwidth}
\hspace*{-1cm}\includegraphics[width=1.2\textwidth]{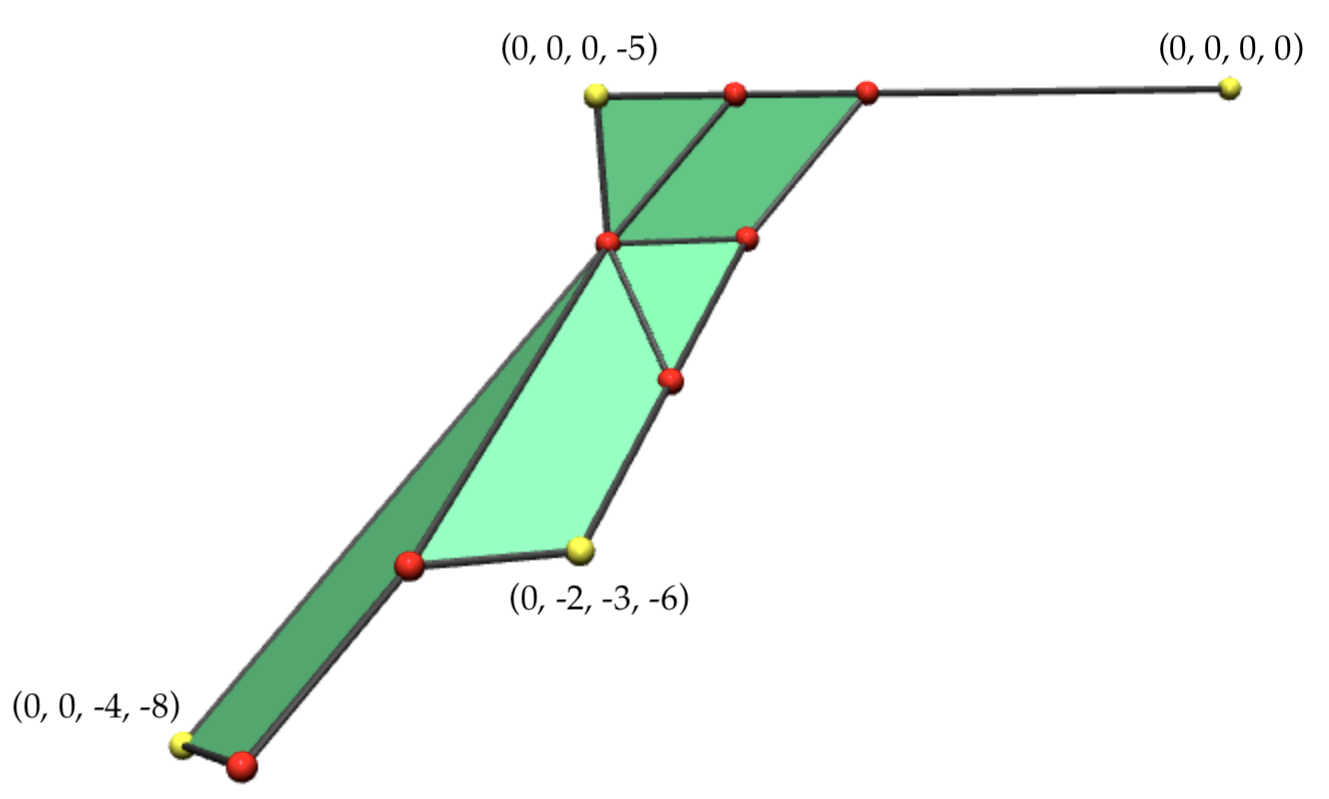}
\caption{The tropical polytope $ P$ obtained by using the membrane $[M]$ for the lattices $M_1, M_2, M_3$, and $M_4$ with Algorithm \ref{convex-hull-computation}. Points spanning the tropical convex hull are marked in yellow.}
\label{tconv1pic}
\end{minipage}
\hfill
\begin{minipage}{0.475\textwidth}
\hspace*{-.35cm}\includegraphics[width=1.2\textwidth]{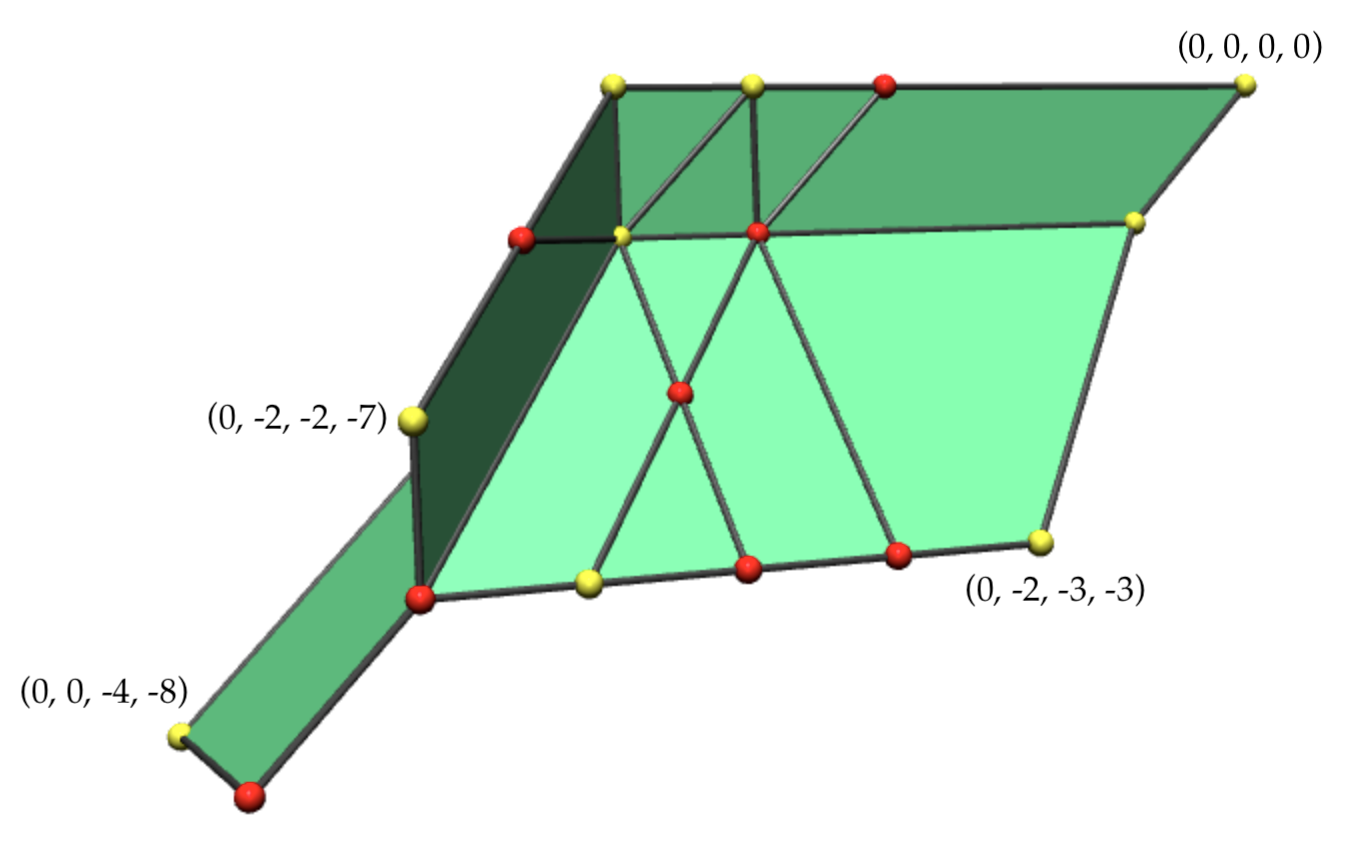}
\caption{The tropical polytope $ P'$ whose standard triangulation is isomorphic to the convex hull of $M_1, M_2, M_3, $ and $M_4$, with spanning vertices marked in yellow.}
\label{tconv2pic}
\end{minipage}
\end{figure}

However, when we run Algorithm \ref{ApartmentList} in Polymake to compute an enveloping membrane for $\conv(M_1, M_2, M_3, M_4)$, we obtain a different matrix $M'$ with 12 distinct columns.
Executing Algorithm \ref{convex-hull-computation} using the membrane $[M']$ yields that $\conv(M_1,M_2,M_3,M_4)$ is isomorphic as a simplicial complex to the tropical polytope $P'$ in Figure \ref{tconv2pic} spanned by
\[
\setcounter{MaxMatrixCols}{20}
\begin{pmatrix}
0 & 0 & 0 & 0 & 0 & 0 & 0 & 0 & 0 & 0 & 0 & 0  \\
-2 & 0 & 0 & 0 & -2 & 0 & -2 & 0 & 0 & -2 & 0 & 0\\
-3 & -4 & 0 & -1 & -2 & 0 & -2 & 0 & -1 & -3 & -1 & 0\\
-6 & -8 & -5 & -5 & -7 & 0 & -7 & -4 & -1 & -3 & -5 & 0\end{pmatrix}.\]

The standard triangulation of this polytope contains 29 lattice points, 67 edges, and 41 triangles. In particular, the convex hull of $M_1, M_2, M_3,$ and $M_4$ is larger than the polytope $ P$ obtained via the membrane $[M]$, even though each lattice spanning the convex hull is trivially contained in $[M]$. In turn, this means that $[M]$ does not contain the convex hull $\conv(M_1,M_2,M_3,M_4)$, demonstrating the fact that membranes are not convex.
\end{example}

\begin{example}
Let $K = \mathbb C((t))$ be the field of formal complex Laurent series, and let $M_1$ be the $4\times 4$ identity matrix, $M_2$ be diagonal with entries $1, t^3, t^{-2},$ and $t^{-2}$ respectively, and
\[
M_3 = \begin{pmatrix}
t^{-3}-t^2 & 1 - t^2 & - t^{-2} + 1 & t^{-2} - t\\
t^2 - t^3 & - t^{-3} + t & 1-t & 0\\
0 & -1+t & t^{-3} - t^3 & t^{-3} - 1\\
-t  + t^2 &- t^{-1}+1 & 0 & - t^{-1}+t^2
\end{pmatrix},\]
\[M_4 = \begin{pmatrix}
1 -t^3 & t^{-1} - 1 & 1 - t^2 & 1 - t^3\\
t^{-3} - 1 & 1 -t & 1 - t^2 & 1-t^3\\
-t^{-3} + t & -t^{-2} + 1 & -t^{-3} + t^{-1} & -1 + t\\
t^{-3}-t^{-2} & -t^{-1} + 1 & -t^{-1} + 1 & t^{-1}-1\end{pmatrix}.\]
\end{example}
Concatenating the matrices produced by Algorithm \ref{ApartmentList} applied to $M_1, M_2, M_3,$ and $M_4$ in Polymake gives a matrix $M$ with 84 distinct columns. Using the corresponding membrane $[M]$ with Algorithm \ref{convex-hull-computation}, we get a $4\times 84$ matrix over the tropical numbers. After pruning duplicate columns, we obtain the following matrix whose tropical row or column span gives the polytope displayed in Figure \ref{3dconvhull}. The triangulation of that polytope has 30 vertices, 95 edges, 102 triangles, and 36 tetrahedra.
\[
\setcounter{MaxMatrixCols}{20}
\begin{pmatrix}
0 &0 &0 &0 &0 &0 &0 &0 &0 &0 &0 &0 \\
-3 & -2 & -1 & -3 & -3 & 0 & 2 & 2 & 0 & -1 & -1 & -3\\
1 & 2 & 3 & 1 & 1 & 3 & 3 & 1 & 1 & 1 & 3 & 3\\
3 & 3 & 3 & 2 & 1 & 1 & 3 & 1 & 1 & 1 & 1 & 1
\end{pmatrix}.\]
\begin{figure}[h!]
\centering
\includegraphics[height=3in]{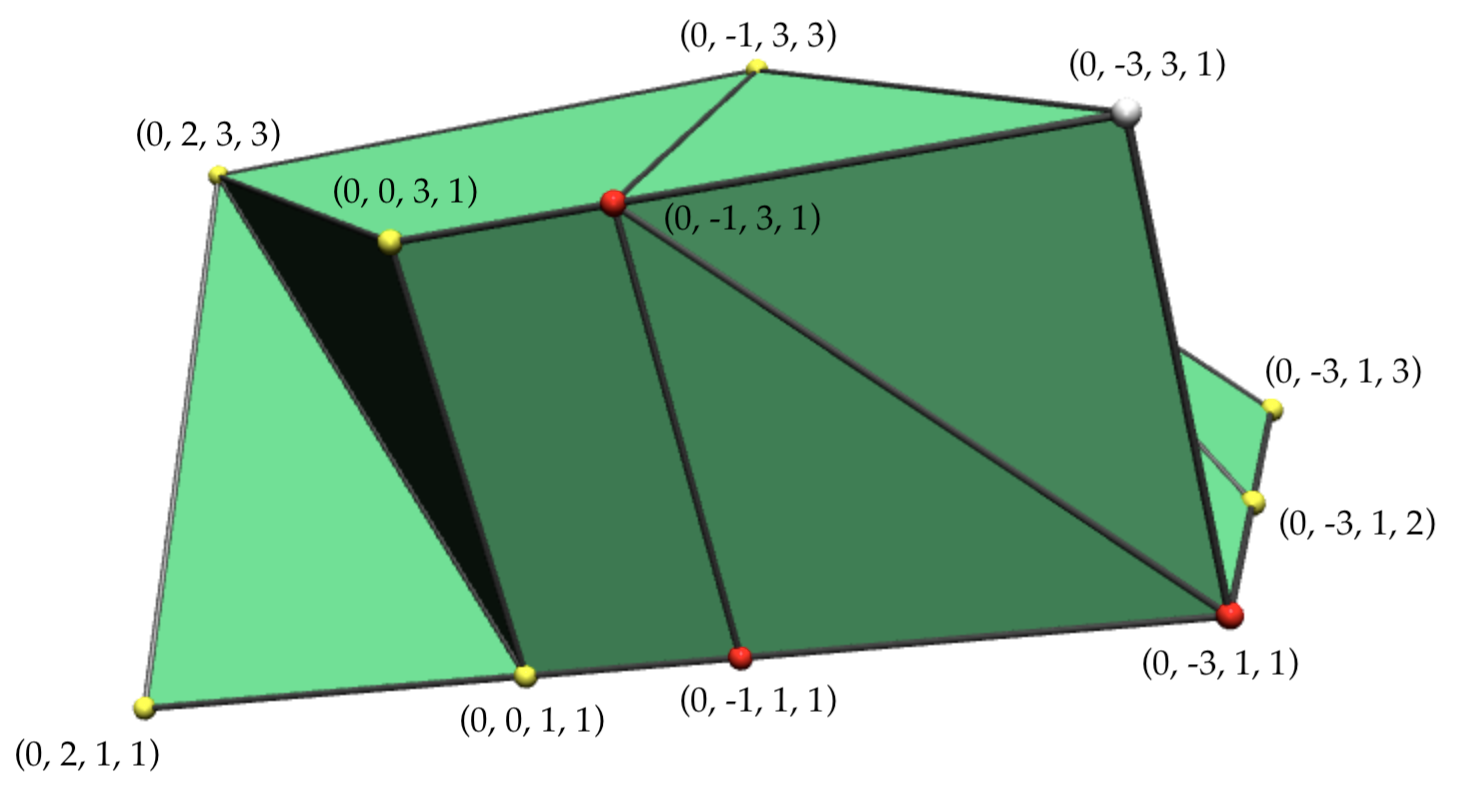}
\caption{The 3-dimensional tropical polytope isomorphic to the convex hull of our matrices $M_1,M_2,M_3,M_4$, whose standard triangulation has $f$-vector (30, 95, 102, 36).}
\label{3dconvhull}
\end{figure}
\section{Convex triangles}
\label{min-conv-triangles-section}

Suppose that $s = 3$, so that we wish to compute a \emph{convex triangle}: the convex hull of three lattice classes. This is relevant e.g. to \cite[Section 4.6]{CHSW}, which focuses on Mustafin varieties arising from convex triangles. In this case there exists a more efficient algorithm, taking advantage of the fact that $\conv(\Lambda_2,\Lambda_3)$ is just a path in the building. We now describe this improvement.

With some extra book-keeping, note that Algorithm \ref{CommonBasis} can output all of the following: 
\begin{itemize}
  \item an SA-basis $A$ which is a basis for $\Lambda_1$,
  \item a diagonal matrix $\Delta=\diag(\pi^{c_1},\ldots, \pi^{c_d})$ such that $A\Delta$ is a basis for $\Lambda_2$, where $c_1\leq \dots \leq c_d$,
  \item all of the base change matrices $N_1,\dots, N_d$,
  \item and the positions $p_1,\dots, p_d$ of the pivots $n_1,\dots, n_d$.
\end{itemize}
We justify the existence of such a $\Delta$. First, note that the base-change matrix $\Delta$ produced by Algorithm \ref{CommonBasis} can always be taken to be diagonal, since other monomial matrices correspond simply to reordering the scaled basis vectors of $\Lambda_1$. Second, we may reorder the columns of $A$ itself in any way we like; in particular, we can order them so that the matrix $\Delta$ has the structure described above.

\vspace{.1in}

\begin{algorithm}[Enveloping membrane for a convex triangle]
\,
\label{enveloping-membrane}
\begin{algorithmic}[1]
\item[]
\REQUIRE $M_1, M_2, M_3$ three $d\times d$ invertible matrices over $K$ whose columns are bases for lattices $\Lambda_1,\Lambda_2,\Lambda_3$ in $\mathcal B_d$
\ENSURE A list $L$ of apartments covering $\conv(\Lambda_1,\Lambda_2,\Lambda_3)$.
\STATE $L\gets \emptyset$
\STATE $(A, \Delta=\diag(\pi^{c_1},\ldots, \pi^{c_d})) \leftarrow$ $d\times d$ matrices such that $A$ is a basis for $\Lambda_2$ and $A\Delta$ is a basis for $\Lambda_3$, with $c_1\leq c_2\leq\ldots\leq c_d$
\FORALL{$i \in \{1,\dots, d-1\}$}
\STATE $\lambda \gets c_i$
\STATE $\Gamma_\lambda\gets \diag(\pi^{\max(\lambda, c_1)}, \dots, \pi^{\max(\lambda, c_d)})$
\STATE $t\gets 0$
\WHILE{$\lambda< c_{i+1}$}
	\STATE $\lambda\gets \lambda +t$
    \STATE $A_\lambda\gets$ an SA-basis for $(M_1, A\Gamma_\lambda)$
    \STATE $(N_1, \dots, N_d)\gets$ the sequence of base-change matrices in the SA-basis computation for $(M_1, A\Gamma_\lambda)$
    \STATE $(p_1,\dots, p_d)\gets$ the sequence of pivot positions in the SA-basis computation for $(M_1, A\Gamma_\lambda)$
    \STATE $L\gets L\cup \{[A_\lambda]\}$
	\STATE $t\gets c_{i+1}-c_i$
    \FORALL{$j\in \{1,\dots, d\}$ such that $p_j$ is in the first $i$ columns}
    	\STATE $v_1\gets$ valuation of $j$th pivot in $N_j$
        \STATE $v_2\gets$ least valuation among all elements of $N_j$ in columns $i+1, i+2, \dots, d$ not in positions $p_1, \dots, p_j$
        \STATE $t \gets \min(t, v_2-v_1+1)$
    \ENDFOR
\ENDWHILE
\ENDFOR
\RETURN{$L$}
\end{algorithmic}
\end{algorithm}

\begin{theorem}
\label{convex-triangles}
Let $M_1,M_2,$ and $M_3$ represent three lattices $\Lambda_1,\Lambda_2$, and $\Lambda_3$ in $\mathcal B_d$. Then Algorithm \ref{enveloping-membrane} correctly computes a list $L$ of apartments covering $\conv(\Lambda_1,\Lambda_2, \Lambda_3)$, where $L$ has size at most $2^{d}$.
\end{theorem}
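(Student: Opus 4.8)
The plan is to reduce the three‑point problem to a one‑parameter family of two‑point problems via Lemma~\ref{min-conv-decomposition}, and then to bound the number of genuinely different SA‑basis computations that family meets. First I would make $\conv(\Lambda_2,\Lambda_3)$ explicit: with $\Gamma_\lambda=\diag(\pi^{\max(\lambda,c_1)},\dots,\pi^{\max(\lambda,c_d)})$, the classes $[A\Gamma_\lambda]$ run over exactly $\conv(\Lambda_2,\Lambda_3)$ as $\lambda$ runs over $\{c_1,\dots,c_d\}$, since $\Gamma_{\lambda+1}\Gamma_\lambda^{-1}$ is diagonal with entries in $\{1,\pi\}$ (so consecutive values give adjacent classes), $\Gamma_{c_1}=\Delta$ recovers $[\Lambda_3]$, and $\Gamma_{c_d}$ is scalar, recovering $[\Lambda_2]$. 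By Lemma~\ref{min-conv-decomposition} it then suffices to produce, for each such $\lambda$, an apartment in the output $L$ containing $\conv(\Lambda_1,[A\Gamma_\lambda])$.

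For correctness, each apartment $[A_\lambda]$ actually recorded by Algorithm~\ref{enveloping-membrane}, being an SA‑basis for $(M_1,A\Gamma_\lambda)$, already contains $\conv(\Lambda_1,[A\Gamma_\lambda])$ by Corollary~\ref{apartment-containing-convex-hull}; the point to argue is that one recorded apartment serves an entire run of consecutive $\lambda$, which is what the inner $t$‑loop is calibrated for. Fixing $i$ and working in the block $c_i\le\lambda<c_{i+1}$, the matrix $\Gamma_\lambda$ differs from $\Gamma_{c_i}$ only by scaling the first $i$ (``active'') columns by a common power of $\pi$, so for $\lambda'=\lambda+\delta$ in the same block one has $A\Gamma_{\lambda'}=(A\Gamma_\lambda)\,\diag(\pi^\delta,\dots,\pi^\delta,1,\dots,1)$ and Lemma~\ref{persistent-scaling} applies. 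The observations I would spell out are: (i) raising active‑column valuations never dislodges a pivot lying in an inactive column, since inactive entries do not move, active ones only grow, and a tie between an active and an inactive column is broken toward the leftmost, hence active, column; and (ii) a pivot at step $j$ in an active column of valuation $v_1$ is dislodged only once $\delta$ reaches $v_2-v_1+1$, where $v_2$ is the least admissible valuation among the inactive columns --- exactly the quantity minimised into $t$ by the inner for‑loop. Hence for $\delta<t$ the two SA‑basis runs have identical pivot positions, Corollary~\ref{pivot-positions} puts $[A\Gamma_{\lambda'}]$ into $[A_\lambda]$, and convexity of the apartment $[A_\lambda]$ settles $\lambda'$. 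Since $t\ge1$ always, the block's recorded apartments cover every $\lambda$ in the block; the endpoint $\lambda=c_d$, which gives $\conv(\Lambda_1,\Lambda_2)$, is covered by the apartment recorded when $\lambda$ first reaches or exceeds $c_d$, where the scaling is uniform and the apartment is simply an SA‑basis of $(M_1,\Lambda_2)$.

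For the size bound, I would show by a recursion mirroring Pascal's rule that within the block with active set $\{1,\dots,i\}$ the computation on $M_1^{-1}A\Gamma_{c_i}\cdot\diag(\pi^e,\dots,\pi^e,1,\dots,1)$ meets at most $\binom{d}{i}$ distinct pivot‑position sequences as $e$ increases. The first pivot is the minimal‑valuation entry of the base‑change matrix, whose active entries all grow with slope one in $e$ and whose inactive entries are constant, so as $e$ increases this pivot either stays forever in one fixed active column, or stays forever in one fixed inactive column, or switches exactly once from a fixed active to a fixed inactive column --- never back. A short computation with the elimination formulas of Algorithm~\ref{CommonBasis} shows that the reduced $(d-1)\times(d-1)$ base‑change matrix again has the same block‑scaled shape, with $i-1$ active columns in the ``active'' branch and $i$ active columns in the ``inactive'' branch. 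Writing $f(d,i)$ for the number of distinct sequences, this gives $f(d,i)\le f(d-1,i-1)+f(d-1,i)$ with $f(d,0)=f(d,d)=1$, whence $f(d,i)\le\binom{d}{i}$. Summing over blocks $i=1,\dots,d-1$, and observing that the at most one apartment recorded with $\lambda\ge c_{i+1}$ (an artefact of the coarse cap $t\le c_{i+1}-c_i$) either coincides with an apartment counted in a later block or is the lone uniform‑scaling apartment, yields $|L|\le 1+\sum_{i=1}^{d-1}\binom{d}{i}\le 2^d$.

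The main obstacle is the counting recursion $f(d,i)\le f(d-1,i-1)+f(d-1,i)$: one must check that a single elimination step of Algorithm~\ref{CommonBasis} preserves the block‑scaled shape of the base‑change matrix, that the number of active columns drops by exactly one precisely when the pivot is drawn from an active column, and that under increasing $e$ a pivot never migrates back from an inactive to an active column --- this last being what makes the two branches of the recursion count disjoint sequence‑sets.
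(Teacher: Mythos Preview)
Your correctness argument---reducing via Lemma~\ref{min-conv-decomposition} to the path $\conv(\Lambda_2,\Lambda_3)$, parametrizing it by $\lambda$, and using Lemma~\ref{persistent-scaling} and Corollary~\ref{pivot-positions} to show that one recorded SA-basis serves an entire run of consecutive $\lambda$'s until a pivot first moves---is essentially the paper's, and your computation of the increment $t$ matches the paper's analysis exactly.

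Your counting argument for the $2^d$ bound, however, takes a genuinely different route. The paper does not recurse on the size of the base-change matrix. Instead, for $\lambda$ in the block $[c_i,c_{i+1})$ it assigns to each $\lambda$ the $i$-subset $\sigma^\lambda\subseteq[d]$ of indices $j$ for which the $j$th pivot lands in an active column, and shows that whenever the pivot sequence changes, $\sigma^\lambda$ strictly increases in the lexicographic order on $i$-subsets: the first pivot to move goes active-to-inactive, so the first index at which $\sigma^\lambda$ and $\sigma^{\lambda+1}$ differ lies in $\sigma^\lambda$ but not in $\sigma^{\lambda+1}$. Monotonicity then caps the number of distinct sequences in the block by $\binom{d}{i}$ directly, with no induction on $d$.

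Your Pascal-style recursion reaches the same bound by a more structural path, tracking how the block-scaled shape (guaranteed by Lemma~\ref{persistent-scaling}) survives one elimination step, with the count of active columns dropping by one exactly when the pivot is active. This is correct and arguably explains \emph{why} the binomial coefficient appears; the paper's argument is shorter and sidesteps having to verify that the reduced matrix inherits the block structure. Both proofs rest on the same irreversibility fact---a pivot that has become inactive never returns to an active column---which you correctly single out as the crux.
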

As before, we can obtain an enveloping membrane for $\Lambda_1,\Lambda_2$, and $\Lambda_3$ by concatenating all matrices in $L$.
\begin{proof}
In the setup of Algorithm \ref{enveloping-membrane}, any class in $\conv(\Lambda_2,\Lambda_3)$ has a representative of the form $A\Gamma_\lambda$, where $\Gamma_\lambda = \diag(\pi^{\max(\lambda, c_1)}, \dots, \pi^{\max(\lambda, c_d)})$ and $\lambda$ is an integer between $c_1$ and $c_d$. It follows from Lemma \ref{min-conv-decomposition} that 
\[\conv(\Lambda_1,\Lambda_2,\Lambda_3) = \bigcup_{c_1\leq \lambda\leq c_d}\conv(M_1, A\Gamma_\lambda).\] 
We can therefore cover $\conv(\Lambda_1,\Lambda_2,\Lambda_3)$ with the apartments $[A_{\lambda}]$ containing $\conv(M_1, A\Gamma_\lambda)$ produced by Algorithm \ref{CommonBasis}. By Corollary \ref{pivot-positions}, furthermore, if we have computed $A_\lambda$ already we only need to compute $A_{\lambda+1}$ if some pivot changes position. 

Suppose this occurs, with $\lambda$ in the range $c_i\leq \lambda<c_{i+1}$. Then $B\Gamma_{\lambda+1}$ is obtained from $A\Gamma_{\lambda}$ by multiplying with the diagonal matrix whose first $i$ diagonal entries are $\pi$ and last $d-i$ diagonal entries are 1. Let $p_j$ be the earliest pivot which changes positions. By Lemma \ref{persistent-scaling}, it follows that the $j$th base-change matrix $N^{(\lambda+1)}_j$ for the pair $(M, A\Gamma^{\lambda+1})$ factors as $N^{(\lambda+1)}_j=N^{(\lambda)}_j\diag(\pi,\dots, \pi, 1, \dots,1)$, where $N^{(\lambda)}_j$ is the $j$th base-change matrix for the pair $(M, A\Gamma^{\lambda})$. Since the $j$th pivot differs for these two matrices, the $j$th pivot must appear in the first $i$ columns and there must be an element of equal valuation appearing in the last $d-i$ columns. Conversely, suppose there exists some $j$th pivot appearing in the first $i$ columns of $N_j^{(\lambda)}$ with an element of equal valuation in the last $d-i$ columns. Then either some earlier pivot already changed, or the $j$th pivot will be different for $N_j^{(\lambda+1)}$.

It follows that, for $\lambda$ in the range $c_i\leq \lambda < c_{i+1}$, we can quickly compute the smallest $t$ such that $(M_1, A\Gamma_{\lambda+t})$ will have some $j$th pivot in a different position than for $(M_1, A\Gamma_\lambda)$. For each $j$th pivot appearing in the first $i$ columns of $N_j^{(\lambda)}$, we can compare its valuation $v_1$ to the smallest valuation $v_2$ of all elements in the last $d-i$ columns of $N_j^{(\lambda)}$. If $p_j$ is the first pivot to change, it will change when $t= t_j:=v_2-v_1+1$. So $t=\min(t_j)$ is our desired increment. In particular, Algorithm \ref{enveloping-membrane} recomputes $A_\lambda$ each time a pivot changes, so it is indeed correct.

Next we prove that the list $L$ has size at most $2^d$. Suppose $\lambda$ is in the range $c_i\leq \lambda < c_{i+1}$. Our claim is that at most $\binom d i$ apartments are computed in this range, so that $\sum_i \binom d i = 2^d$ bounds the number of apartments in $L$. Write an $i$-sized subset $\sigma$ of $[d]$ as $(\sigma_1,\sigma_2,\dots, \sigma_i)$, where $\sigma_1<\sigma_2\dots<\sigma_i$. We can assign to each $\lambda$ an $i$-sized subset $\sigma^\lambda$ of $[d]$, where $j\in\sigma^\lambda$ if and only if the $j$th pivot appears in the first $i$ columns of $N_j$ when computing an SA-basis for $M_1$ and $A\Gamma_\lambda$. We can also define a well-ordering on the set of all $i$-sized subsets of $[d]$ lexicographically: $\sigma< \tau$ if and only if the first $j$ with $\sigma_j \neq \tau_j$ satisfies $\sigma_j<\tau_j$. The key insight is that $\sigma^{\lambda}< \sigma^{\lambda+1}$ if the corresponding pivot sequences for $\lambda$ and $\lambda+1$ differ. Since there are $\binom d i$ possible choices for $\sigma^\lambda$, there can be at most $\binom d i$ different pivot position changes for $\lambda$ in this range.

It remains to show why this key fact holds. Suppose that incrementing $\lambda$ by one changes some pivot position, with the $j$th pivot the first to change. The above analysis shows that the $j$th pivot for the pair $(M_1, A\Gamma_\lambda)$ must be in the first $i$ columns, and that this must change for the pair $(M_1, A\Gamma_{\lambda+1})$. It follows that $j$ must be in $\sigma^{\lambda}$, and that $j$ cannot be in $\sigma^{\lambda+1}$. Furthermore, because $j$ is the first pivot to change, for each $\ell<j$ we have $\ell\in \sigma^\lambda \iff \ell \in \sigma^{\lambda+1}$. Hence $\sigma^\lambda <\sigma^{\lambda+1}$, as desired.
\end{proof}

\begin{example}
Fix $K = \mathbb Q_3$ and the building $\mathcal B_5$. Let $M_1$ be the $5\times 5$ identity matrix, and let each entry of $M_2$ and $M_3$ be sampled uniformly at random from the finite set $\{3^e: e\in \mathbb Z, -20\leq e \leq 20\}$. The author took 1000 such triangles and computed enveloping membranes via Algorithm \ref{enveloping-membrane} in Mathematica. After pruning duplicate columns, the matrices describing the enveloping membranes always had at least 6 columns, and at most 25. For comparison, the upper bound implied by our Algorithm \ref{enveloping-membrane} is $5\cdot 32 = 160$ columns. A histogram describing the frequency counts for the size of the membranes is presented in Figure \ref{hist}. 

\begin{figure}[h]
\includegraphics[height=2.55in]{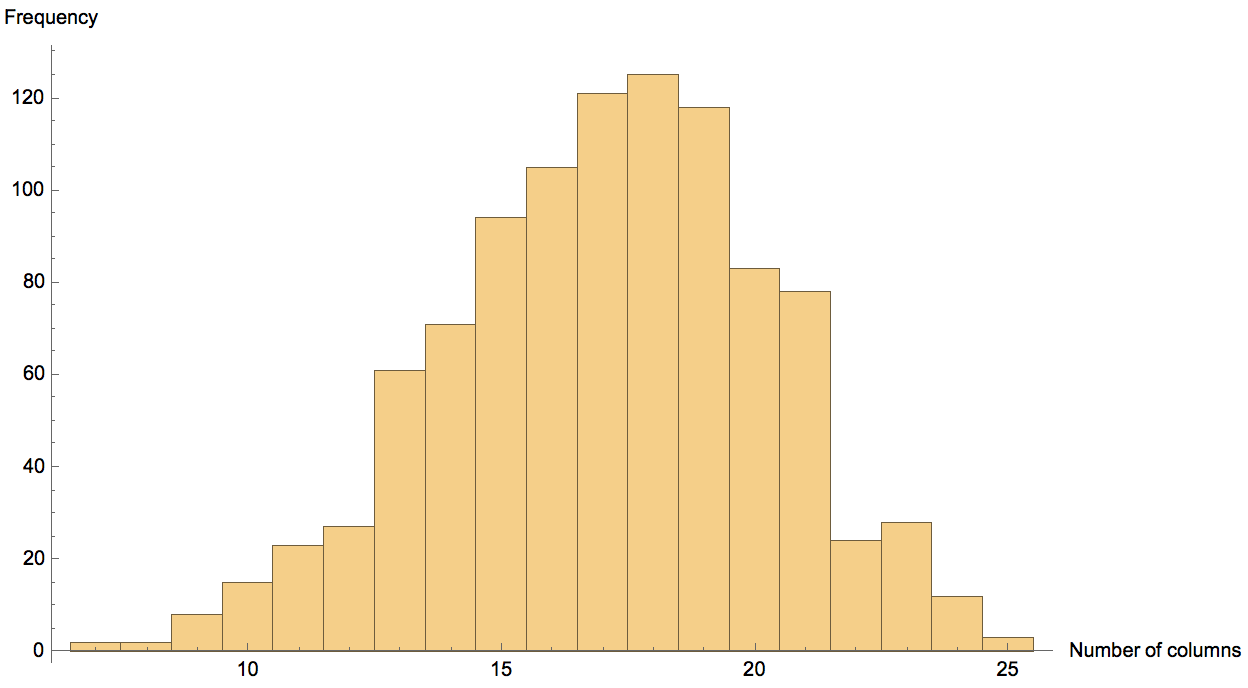}
\caption{Frequency counts for the number of columns of enveloping membranes produced by Algorithm \ref{enveloping-membrane} for random convex triangles.}
\label{hist}
\end{figure}
One example of a convex triangle attaining the maximal number of columns is given by
\[M_2=\begin{pmatrix}
3^{-15} & 3^{16} & 3^{-7} & 3^{-8} & 3^{-13}\\
3^{-13} & 3^{20} & 3^{-12} & 3^{-9} & 3\\
3^{-19} & 3^{19} & 3^7 & 3^{-15} & 3^{10}\\
3^9 & 3^{-12} & 3^{-12} & 3^{-17} & 3^{-18}\\
3^{-17} & 3^{-4} & 3^{-7} & 3^{-3} & 3^{20}
\end{pmatrix},
M_3 = \begin{pmatrix}
3^{-1} & 3^{-8} & 3^{-20} & 3^{-1} & 3^{-20}\\
3^{10} & 3^6 & 3^0 & 3^2 & 3^{-20}\\
3^{-6} & 3^8 & 3^3 & 3^5 & 3^{-13}\\
3^{-15} & 3^9 & 3^{-9} & 3^2 & 3^{-7}\\
3^{12} & 3^{-3} & 3^5 & 3^{-16} & 3^{-13}
\end{pmatrix}.\]
After applying Algorithm \ref{enveloping-membrane} to obtain an appropriate membrane $[M]$, the author computed the tropical polytope via Algorithm \ref{convex-hull-computation} presented in Figure \ref{tconvpic3}.
\begin{figure}[h]
\hspace*{-1.35cm}\includegraphics[height=3.1in]{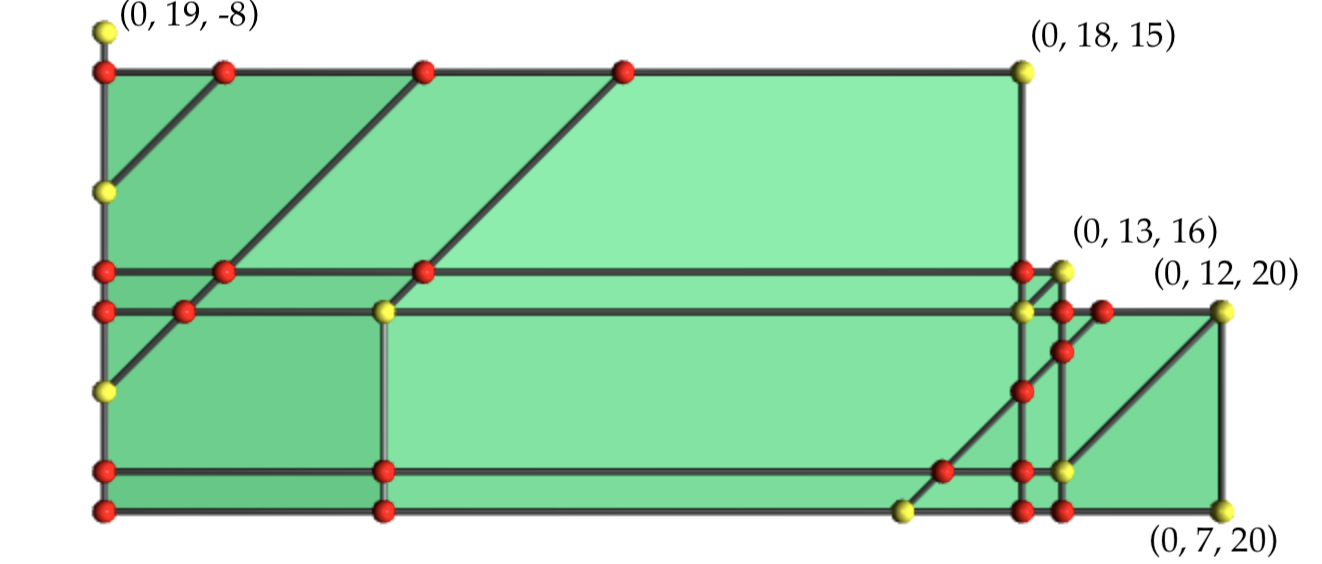}
\caption{The tropical polytope isomorphic to the convex hull of $M_1, M_2, M_3$, with spanning vertices in yellow. Note that the $x$- and $y$-axes have been flipped.}
\label{tconvpic3}
\end{figure}
Note that this convex hull can be spanned by only five of the given points: $(0, 19, -8)$, $(0, 18, 15)$, $(0, 13, 16), (0, 12, 20),$ and $(0, 7, 20)$. Let $M'$ be the square submatrix of $M$ with columns corresponding to these five points. Running through Algorithm \ref{convex-hull-computation} using the apartment $[M']$ yields a coarser subdivision of the same tropical polytope. This implies that the convex hull of our three matrices $M_1, M_2,$ and $M_3$ all lie in the single common apartment $[M']$, which can also be seen using \cite[Lemma 25]{JSY}. That our algorithms do not notice this fact suggests that they likely can be improved.
\end{example}

\bibliographystyle{amsalpha}
\bibliography{master.bbl}
\end{document}